\theoremstyle{definition}
\newtheorem{definition}{Definition}[section]
\newtheorem{remark}{Remark}[section]
\newtheorem{lemma}[definition]{Lemma}
\newtheorem{theorem}[definition]{Theorem}
\newcommand{\mf}{\mathcal{F}}
\newcommand{\pr}{\mathbb{P}}
\newcommand{\bE}{\mathbb{E}}
\newcommand{\Om}{\Omega}
\newcommand{\rd}{d}
\begin{document}

\title{Semi-implicit Milstein approximation scheme for non-colliding particle systems\footnote{This research is funded by Vietnam National Foundation for Science and Technology Development (NAFOSTED) under grant number 101.03-2017.316. The paper was completed during a scientific stay
		of the second author at the Vietnam Institute for Advanced Study in Mathematics (VIASM), whose
		hospitality is gratefully appreciated.
}}
\author{   Duc-Trong Luong\footnote{Email: trongld@hnue.edu.vn} \quad    Hoang-Long Ngo\footnote{Eamil: ngolong@hnue.edu.vn}}
\date{Hanoi National University of Education} 


\maketitle

\textbf{Abstract:} We introduce a semi-implicit Milstein approximation scheme for some classes of non-colliding particle systems modeled by systems of stochastic differential equations with non-constant diffusion coefficients. We show that the scheme converges at the rate of order 1 in the mean-square sense.

\textbf{Keywords:} {Dyson Brownian motion  \and  Milstein scheme \and  Particle system \and  Stochastic differential equation \and  Strong approximation}

\section{Introduction}
 We consider a process $X= (X_1(t),X_2(t),\ldots,X_d(t))_{t\geq 0}$ given by the following stochastic differential equation (SDE)
 \begin{equation} \label{Xt}
X_i(t)=X_i(0) + \int_0^t \left(\sum\limits_{j\neq i} \dfrac{\gamma_{ij}}{X_i(s)-X_j(s)} +b_i(X_i(s))\right)ds
+ \int_0^t \sigma_i(X_i(s))dW_i(s), \quad {\color{black} 1\leq i\leq d,}
\end{equation}
where {\color{black}$X(0)$ is a deterministic constant  and} belongs to $\Delta_d = \{\mathbf{x}=(x_1,x_2,\ldots,x_d) \in \mathbb{R}^d:x_1<x_2<\ldots<x_d\},$  $\gamma_{ij} =\gamma_{ji}\geq 0$ and $({\color{black}W(t)}=(W_1(t),W_2(t)),\ldots,W_d(t))_{t\geq 0}$ is a $d$-dimensional Brownian motion defined on a filtered probability space  $(\Om, \mf, (\mf_t)_{t\geq 0},\pr)$.

In mathematical physics, the process {\color{black}$X$} is used to model systems of $d$ non-colliding particles evolving on the real line, such as Dyson Brownian motion or particles with electrostatic repulsion. The SDE  \eqref{Xt} was first studied by {\color{black} Dyson (1962), where it} is used to represent the eigenvalues of a $d\times d$-dimensional symmetric Gaussian random matrix. The theory was later developed by Bru (1989) and Bru (1991), where it was showed that the eigenvalues of a Wishart process also satisfy a system of the form \eqref{Xt}. There have been many works on the existence and uniqueness of the solution to equation \eqref{Xt}, e.g., C\'epa and L\'epingle  (1997), Graczyk and Ma\l ecki (2014), L\'epingle (2010), Rogers and Shi (1993), Nakanuma and Taguchi (2018). Many applications and interesting {\color{black} features} of {\color{black}$X$} were presented in Katori and Tanemura (2004), Rost and Vares (1985), and  Ramanan and Shkolnikov (2018).  

The main aim of this paper is to introduce a high order numerical approximation scheme for equation \eqref{Xt} such that the approximate solution always stays in $\Delta_d$. Since the multidimensional SDEs whose solution stays in a domain appear in many applications such as biology, finance, and physics (see Kloeden and Platen 1995), their numerical approximation has been studied extensively.  Gy\"ongy (1998) introduced a polygonal Euler approximation for SDEs on domains of
$\mathbb{R}^d$ and showed that it converges almost surely if the drift {\color{black} coefficient} satisfies a monotonicity condition and the diffusion coefficient is Lipschitz continuous. For SDEs with locally Lipschitz continuous coefficients,  Jentzen et al. (2009) introduced a projection Euler method and showed that it converges at the rate of order $1$ in the pathwise sense. The main difficulty in constructing a numerical approximation for equation \eqref{Xt} comes from the fact that its drift coefficient is non-locally Lipschitz continuous and even blows up at the boundary of $\Delta_d$. 
The first numerical simulation for $X_t$ is presented in Li and Menon (2013) where the authors introduced a tamed Euler-Maruyama approximation scheme. However, this tamed scheme  does not preserve the non-colliding property of the original system.   Ngo and Taguchi (2017) introduced a semi-implicit Euler-Maruyama approximation scheme for the SDE \eqref{Xt} and studied its convergence in $L^p$-norm. A key feature of their new scheme is that the approximate solution always stays inside the domain $\Delta_d$ as the true solution does.  They showed that if the {\color{black}coefficients} $b= (b_i)_{1 \leq i \leq d }$ and $\sigma=(\sigma_i)_{1 \leq i \leq d}$ are Lipschitz continuous then the Euler-Maruyama approximation scheme converges at the rate of order $1/2$. Moreover, if $\sigma$ is a constant and $b$ is differentiable up to order $2$, then the Euler-Maruyama approximation scheme converges at the rate of order $1$. 

In this paper, we introduce a semi-implicit Milstein approximation scheme for the SDE \eqref{Xt}. We show that the approximate solution always stays inside the domain $\Delta_d$ and it converges at the rate of order $1$ in the mean-square sense when $b$ and $\sigma$ are bounded and differentiable continuous up to order $2$. Since when $\sigma$ is constant, our semi-implicit Milstein scheme coincides with the semi-implicit Euler-Maruyama scheme in Ngo and Taguchi (2017), our result can be considered as a generalization of the one in Ngo and Taguchi (2017) for SDEs with non-constant diffusion coefficients. To the best of our knowledge, this is the first approximation scheme of strong order 1 for multidimensional SDEs defined in a domain.  

The rest of the paper is organized as follows. In Section \ref{sec2}, we introduce the semi-implicit Milstein approximation scheme and state our main result in Theorem \ref{Thm1}. The proof is given in Section \ref{sec3}. A numerical simulation is presented in Section \ref{sec4} 

 \section{Semi-implicit Milstein approximation scheme} \label{sec2}
 The semi-implicit Milstein approximation scheme is defined as follows. For each integer $n\geq 1$ and $T>0$, we set  $t^{(n)}_k = \frac{kT}{n}$, and 
  $X^{(n)}(0):=X(0)$, and for each $k=0,\ldots, n-1$ and ${\color{black} t \in \left[t^{(n)}_{k},t^{(n)}_{k+1}\right]}$, $X^{(n)}(t)= (X^{n}_i(t))_{1\leq i \leq d}$ is the unique solution in  $\Delta_d$ of the following equations
 \begin{align}\label{Xnt}
 X_i^{(n)}(t)&= X_i^{(n)}(t^{(n)}_{k})+ \left[\sum\limits_{j\neq i}\dfrac{\gamma_{ij}}{X_i^{(n)}(t)-X_j^{(n)}(t)}+b_i\left(X_i^{(n)}(t^{(n)}_{k})\right)\right](t - t^{(n)}_k)\notag\\
 &+ \sigma_{i}\left(X_i^{(n)}(t^{(n)}_{k})\right)\left[W_i(t)-W_i(t^{(n)}_{k})\right]\notag\\
 &+\dfrac{1}{2}{\color{black}\sigma_{i}\left(X_i^{(n)}(t^{(n)}_{k})\right)\sigma'_{i}\left(X_i^{(n)}(t^{(n)}_{k})\right)}\left[\left(W_i(t)-W_i(t^{(n)}_{k})\right)^2-(t - t^{(n)}_k)\right], \quad i=1,\ldots, d. 
 \end{align}
The existence and uniqueness of solution to equation \eqref{Xnt} {\color{black} follows} from Proposition 2.2 in Ngo and Taguchi (2017)  under an assumption that $\gamma_{i,i+1}>0$ for all $i=1,\ldots, d-1$. 

\indent We {\color{black} set}
 $X_{ij}(t)=X_i(t)-X_j(t), X^{(n)}_{ij}(t)=X^{(n)}_i(t)-X^{(n)}_j(t)$ and  $e_i(t)=X_i(t)-X_i^{(n)}(t)$. For $x\in \mathbb{R}^d$, we denote by $\|x\|$ the Euclidian norm of $x$.

{\color{black} Throughout this paper, we use $C>0$ to denote a generic constant, which is independent of $n$, but may depend on $b,\sigma,\gamma_{ij}$ and $x_0$. The value of $C$ may vary frome place to place. When $C$ depends on some addtional parameter, say $p$, we denote it by $C(p)$.} 

\noindent 
\textbf{Assumption $H_{\hat{p}}$:}
	The equation \eqref{Xt} has a unique strong solution in $\Delta_d$, and there exist some {\color{black}positive constants  $\hat{p}$ and  $C$ such that 
	$$\sup\limits_{t\in[0,T]}\mathbb{E}\left[\|X(t)\|^{\hat{p}}\right]+\max\limits_{1\leq i\leq d-1}\sup\limits_{t\in[0,T]}\mathbb{E}\left[|X_{i,i+1}(t)|^{-\hat{p}}\right]<C,$$
	and 
	$$\mathbb{E}\left[\|X(t)-X(s)\|^{\hat{p}}\right]\leq C|t-s|^{\hat{p}/2},\mbox{ for all } 0\leq s<t\leq T.$$}

\begin{remark}
	It was shown in Ngo and Taguchi (2017) that  Assumption $H_{\hat{p}}$ is satisfied for some classes of particle systems of the form \eqref{Xt}, such as the interacting Brownian particles and the Brownian particles with nearest neighbor replusion. 
\end{remark} 
	
{\color{black}We denote by $\mathcal{T}_n$ the set of all stopping times $\tau$ taking value in the set $\{t^{(n)}_k, 0\leq k \leq n\}$, and $C^2_b(\mathbb{R})$} the set of all functions $f: \mathbb{R} \to \mathbb{R}$ such that $f, f'$ and $f''$ are bounded. 
\begin{theorem}\label{Thm1}
	Suppose that $b,\sigma\in C^2_b(\mathbb{R})$.
	{\begin{enumerate}
			\item[(i)] If Assumption $H_{\hat{p}}$ holds for some  $\hat{p}\geq 6$, then {\color{black}
			\begin{align} \label{kq1}
			\sup_{\tau \in \mathcal{T}_n} \mathbb{E}\left[ \left\|X(\tau)-{\color{black}X^{(n)}}(\tau)\right\|^2 \right] \leq \dfrac{C}{n^2}.
			\end{align}
			Moreover, for any $p \in (0,2)$, it holds that
			\begin{align} \label{kq2} \mathbb{E}\left[\sup\limits_{0\leq k\leq n}\|X(t^{(n)}_k)-X^{(n)}(t^{(n)}_k)\|^p\right]\leq \dfrac{C(p)}{n^p}.
			\end{align}}
			\item[(ii)] If Assumption $H_{\hat{p}}$ holds for some  ${\hat{p}\geq 18}$, then for any $p \in (0,2)$, 
			\begin{equation} \label{kq3}
			\mathbb{E}\left[\sup_{t\in[0,T]}\left\|X(t)-X^{(n)}(t)\right\|^p\right]\leq C(p)\dfrac{(\log n)^{3p/2}}{n^p}.
			\end{equation}
	\end{enumerate}}
\end{theorem}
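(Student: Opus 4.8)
\medskip
\noindent\textbf{Proof proposal.}

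The plan is to use the monotonicity of the interaction kernel to reduce the error analysis to a standard Milstein estimate driven only by the true solution, and then to close a discrete Gronwall argument adapted to stopping times. Write $e(t)=X(t)-X^{(n)}(t)$ and, for $x\in\Delta_d$, set $G(x)=\big(\sum_{j\neq i}\gamma_{ij}/(x_i-x_j)\big)_{1\le i\le d}$. The elementary but crucial fact is the monotonicity inequality $\langle G(x)-G(y),x-y\rangle\le 0$ for all $x,y\in\Delta_d$ (because $u\mapsto 1/u$ is decreasing and $\gamma_{ij}=\gamma_{ji}\ge 0$). Let $\widehat{X}^{(n)}(t)$ denote the \emph{explicit} part of the scheme, so that \eqref{Xnt} reads $X^{(n)}(t)=\widehat{X}^{(n)}(t)+(t-t^{(n)}_k)G(X^{(n)}(t))$ on $[t^{(n)}_k,t^{(n)}_{k+1}]$. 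Subtracting the trivial identity $X(t)=[X(t)-(t-t^{(n)}_k)G(X(t))]+(t-t^{(n)}_k)G(X(t))$, taking the inner product with $e(t)$, and discarding the nonpositive term $(t-t^{(n)}_k)\langle e(t),G(X(t))-G(X^{(n)}(t))\rangle$, I obtain the pathwise bound
\begin{equation*}
\|e(t)\|\ \le\ \big\|\,[X(t)-(t-t^{(n)}_k)G(X(t))]-\widehat{X}^{(n)}(t)\,\big\|,\qquad t\in[t^{(n)}_k,t^{(n)}_{k+1}].
\end{equation*}
The right-hand side no longer involves the singular drift of $X^{(n)}$. Expanding it at $t=t^{(n)}_{k+1}$ by It\^o's formula applied to $X$ gives $\|e(t^{(n)}_{k+1})\|\le\|e(t^{(n)}_k)+E_k+R_k\|$, where, with $\Delta W_k=W(t^{(n)}_{k+1})-W(t^{(n)}_k)$,
\begin{align*}
E_k&=\tfrac{T}{n}\big[b(X)-b(X^{(n)})\big](t^{(n)}_k)+\big[\sigma(X)-\sigma(X^{(n)})\big](t^{(n)}_k)\,\Delta W_k\\
&\quad+\tfrac12\big[\sigma\sigma'(X)-\sigma\sigma'(X^{(n)})\big](t^{(n)}_k)\big((\Delta W_k)^2-\tfrac{T}{n}\big),
\end{align*}
and $R_k$ is a remainder built only from the true solution on $[t^{(n)}_k,t^{(n)}_{k+1}]$: a Milstein It\^o-integral part, a drift part $\int[b(X(s))-b(X(t^{(n)}_k))]\,ds$, and a singular part $\int[G(X(s))-G(X(t^{(n)}_{k+1}))]\,ds$.

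For \eqref{kq1}, fix $\tau\in\mathcal{T}_n$, square the last inequality on the $\mathcal{F}_{t^{(n)}_k}$-event $\{\tau>t^{(n)}_k\}$ and take expectations. Using that $e(t^{(n)}_k)$ and the coefficients of $E_k$ are $\mathcal{F}_{t^{(n)}_k}$-measurable, that $\mathbb{E}[\Delta W_k\mid\mathcal{F}_{t^{(n)}_k}]=0$ and $\mathbb{E}[(\Delta W_k)^2\mid\mathcal{F}_{t^{(n)}_k}]=\tfrac{T}{n}$, and that $\mathbb{E}[\langle e(t^{(n)}_k),R_k\rangle\mid\mathcal{F}_{t^{(n)}_k}]=\langle e(t^{(n)}_k),\mathbb{E}[R_k\mid\mathcal{F}_{t^{(n)}_k}]\rangle$, the surviving contributions are: a Gronwall term $\le Cn^{-1}\mathbb{E}[\|e(t^{(n)}_k)\|^2\mathbf{1}_{\{\tau>t^{(n)}_k\}}]$ (from the frozen drift and from the conditional second moment of $[\sigma(X)-\sigma(X^{(n)})](t^{(n)}_k)\Delta W_k$, where Lipschitz continuity of $b,\sigma$ and $\sigma\sigma'$ — a consequence of $b,\sigma\in C^2_b$ — is used); the forcing $\mathbb{E}[\|R_k\|^2]\le Cn^{-3}$; and, after Young's inequality, a term $\le Cn^{-1}\mathbb{E}[\|e(t^{(n)}_k)\|^2\mathbf{1}_{\{\tau>t^{(n)}_k\}}]+Cn^{-3}$ coming from $2\langle e(t^{(n)}_k),\mathbb{E}[R_k\mid\mathcal{F}_{t^{(n)}_k}]\rangle$ via the \emph{weak} estimate $\|\mathbb{E}[R_k\mid\mathcal{F}_{t^{(n)}_k}]\|_{L^2}\le Cn^{-2}$. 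This last estimate is where Assumption $H_{\hat p}$ with $\hat p\ge 6$ enters: by It\^o's formula the drift of the semimartingale $(G_i(X(t)))_t$ involves $\nabla G_i$ and $\nabla^2G_i$ contracted with the drift $G+b$ and the diffusion $\sigma$, hence terms of order $|X_{i,i+1}|^{-3}$, whose $L^2$-norm is finite precisely when $\mathbb{E}[|X_{i,i+1}(s)|^{-6}]<\infty$. Writing $\beta_k:=\mathbb{E}[\|e(t^{(n)}_k\wedge\tau)\|^2]$ and using $\beta_{k+1}-\beta_k=\mathbb{E}[(\|e(t^{(n)}_{k+1})\|^2-\|e(t^{(n)}_k)\|^2)\mathbf{1}_{\{\tau>t^{(n)}_k\}}]$, the above gives $\beta_{k+1}\le(1+Cn^{-1})\beta_k+Cn^{-3}$, so by discrete Gronwall $\mathbb{E}[\|e(\tau)\|^2]=\beta_n\le Cn^{-2}$, uniformly in $\tau$, which is \eqref{kq1}. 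Then \eqref{kq2} follows by applying \eqref{kq1} to $\tau_\lambda:=\min\{t^{(n)}_k:\|e(t^{(n)}_k)\|\ge\lambda\}$ (with $\min\emptyset:=T$): this gives $\mathbb{P}(\max_k\|e(t^{(n)}_k)\|\ge\lambda)\le Cn^{-2}\lambda^{-2}$, whence $\mathbb{E}[\max_k\|e(t^{(n)}_k)\|^p]\le\int_0^\infty p\lambda^{p-1}\min(1,Cn^{-2}\lambda^{-2})\,d\lambda=C(p)n^{-p}$ for $p\in(0,2)$.

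For \eqref{kq3}, the pathwise reduction is valid for every $t\in[t^{(n)}_k,t^{(n)}_{k+1}]$, so $\sup_{t\in[0,T]}\|e(t)\|$ is bounded by $\max_k$ of $\|e(t^{(n)}_k)\|\big(1+\sup_{[t^{(n)}_k,t^{(n)}_{k+1}]}\|W(\cdot)-W(t^{(n)}_k)\|+\sup_{[t^{(n)}_k,t^{(n)}_{k+1}]}\|W(\cdot)-W(t^{(n)}_k)\|^2\big)$ plus $\sup_{[t^{(n)}_k,t^{(n)}_{k+1}]}\|R_k(t)\|$, the latter handled by the Burkholder--Davis--Gundy and Doob inequalities. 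Taking the maximum over the $n$ subintervals, applying H\"older's inequality, and using a higher-moment version of \eqref{kq2} (obtained by rerunning the argument of (i) with $\|e\|^{2m}$ in place of $\|e\|^2$, which requires $\mathbb{E}[|X_{i,i+1}(s)|^{-6m}]<\infty$ and hence the stronger threshold $\hat p\ge 18$) produces the bound for $\mathbb{E}[\sup_{[0,T]}\|e(t)\|^p]$, the maxima over the $n$ Brownian oscillations contributing the three powers of $\log n$.

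The step I expect to be the main obstacle is the analysis of the singular remainder $\int[G(X(s))-G(X(t^{(n)}_{k+1}))]\,ds$ inside $R_k$: establishing simultaneously $\|R_k\|_{L^2}=O(n^{-3/2})$ and the refined $\|\mathbb{E}[R_k\mid\mathcal{F}_{t^{(n)}_k}]\|_{L^2}=O(n^{-2})$ is exactly where the blow-up of the drift at $\partial\Delta_d$ interacts with the discretization, and it is this estimate that dictates the integrability exponent in Assumption $H_{\hat p}$.
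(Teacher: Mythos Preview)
Your argument for part (i) and \eqref{kq2} matches the paper's essentially line by line: the same monotonicity estimate reducing $\|e(t)\|$ to $\|e(t^{(n)}_k)+V(t)\|$ (the paper's $V$ is your $E_k+R_k$, split into the pieces $S_{1i},\ldots,S_{6i}$), the same It\^o expansion of the singular remainder to separate a martingale part from a small conditional mean, the same stopped discrete Gronwall (the paper packages it as Lemma~\ref{Lem1}), and the same tail-bound passage from \eqref{kq1} to \eqref{kq2} (the paper cites Gy\"ongy--Krylov, you write it out).

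For part (ii), however, you misidentify the bottleneck and leave a gap. The paper does \emph{not} rerun the argument with $\|e\|^{2m}$; it uses only \eqref{kq2} for $p<2$ together with direct bounds on $\mathbb{E}\big[\sup_{t\in[0,T]}\|S_m(t)\|^2\big]$. The threshold $\hat p\ge 18$ enters through the singular piece $S_1$ alone: bounding $\sup_t\|S_1(t)\|^2$ pathwise produces factors $|X_{i,i+1}(t)|^{-6}$ with a supremum in $t$, and Lemma~\ref{bd2}(ii) shows that $\mathbb{E}\big[\sup_t|X_{i,i+1}(t)|^{-6}\big]<\infty$ requires $H_{\hat p}$ with $\hat p/3\ge 6$, i.e.\ $\hat p\ge 18$. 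Your statement that $\sup_{[t^{(n)}_k,t^{(n)}_{k+1}]}\|R_k(t)\|$ is ``handled by the Burkholder--Davis--Gundy and Doob inequalities'' is only correct for the martingale pieces ($S_3$ and the It\^o parts of $S_1,S_2$); the purely Lebesgue integral $\int_{t^{(n)}_k}^{t}[G(X(s))-G(X(t))]\,ds$ is not a martingale, so BDG/Doob do not apply. The paper instead uses an elementary AM--GM splitting together with a modulus-of-continuity estimate for $X$ (Lemma~\ref{bd2}(i), via Fischer--Nappo), and it is this term that produces the factor $(\log n)^{3p/2}$. Your higher-moment route might be pushed through, but as written it neither closes the singular sup-estimate nor explains the logarithm.
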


\section{Proof} \label{sec3} 

\subsection{Representation of estimate error} 
	For each $i = 1, \ldots, d$, we {\color{black} denote by}  
	$e_i(t)=X_i(t)-X_i^{(n)}(t),$ and  $\|e(t)\|^2=\sum\limits_{i=1}^{d}e_i^2(t).$
	For each {\color{black}$t\in\left[t_k^{(n)},t_{k+1}^{(n)}\right]$ and $i=1, \ldots, d$}, it follows from \eqref{Xt} and \eqref{Xnt} that   	
	\begin{align} \label{etetn}
	e_i(t) &=e_i(t^{(n)}_{k})+V_i(t)+\sum\limits_{j\neq i}\left(\dfrac{\gamma_{ij}}{X_{ij}(t)}-\dfrac{\gamma_{ij}}{X^{(n)}_{ij}(t)}\right)(t-t^{(n)}_k), 
	\end{align}
	where $V_i(t)=S_{1i}(t)+S_{2i}(t)+S_{3i}(t)+S_{4i}(t)+S_{5i}(t){\color{black}+S_{6i}(t)}$, and 
\begin{align}
	S_{1i}(t) &=  \int_{t^{(n)}_{k}}^{t}\sum\limits_{j\neq i}\left(\dfrac{\gamma_{ij}}{X_{ij}(s)}-\dfrac{\gamma_{ij}}{X_{ij}(t)}\right)ds, \label{dnS1i} \\
	S_{2i}(t) &= \int_{t^{(n)}_{k}}^{t}\left[b_i(X_i(s))-b_i(X_i(t^{(n)}_k))\right]ds, \label{dnS2i}\\
	S_{3i}(t) &= \int_{t^{(n)}_{k}}^{t} \left[\sigma_i(X_i(s))-\sigma_i(X_i(t^{(n)}_k))-\int_{t^{(n)}_{k}}^{s}{\color{black}\sigma_i \left(X_i(t^{(n)}_{k})\right)\sigma_i' \left(X_i(t^{(n)}_{k})\right)} dW_i(u)\right] dW_i(s), \label{dnS3i}\\
	S_{4i}(t) &= \left[b_i(X_i(t^{(n)}_k))-b_i(X_i^{(n)}(t^{(n)}_k))\right](t-t_k^{(n)}), \label{dnS4i}\\
	S_{5i}(t)  &= \left[\sigma_i({\color{black}X_i(t^{(n)}_k)})-\sigma_i(X_i^{(n)}(t^{(n)}_k))\right]\left(W_i(t)-W_i(t^{(n)}_k)\right), \label{dnS5i}\\
	{\color{black}S_{6i}(t)}  &= \dfrac{1}{2}\left[\sigma_i(X_i(t^{(n)}_k))\sigma'_i(X_i(t^{(n)}_k))-\sigma_i(X_i^{(n)}(t^{(n)}_k))\sigma'_i(X_i^{(n)}(t^{(n)}_k))\right] \notag \\
		&\qquad \qquad  \times \left[\left(W_i(t)-W_i(t_k^{(n)}\right)^2-(t-t_k^{(n)}\right]. \label{dnS6i}
\end{align}
It follows from \eqref{etetn} that   
	\begin{align*}
	{\color{black}e_i^2(t_k^{(n)})}&+2V_i(t)e_i(t^{(n)}_k)+V_i^2(t)=\left\{e_i(t)-\sum\limits_{\color{black}j\neq i}\left[\dfrac{\gamma_{ij}}{X_{ij}(t)}-\dfrac{\gamma_{ij}}{X^{(n)}_{ij}(t)}\right](t-t^{(n)}_k)\right\}^2\\
	&\geq {\color{black}e_i^2(t)}-2e_i(t) \sum\limits_{j\neq i} \left[\dfrac{\gamma_{ij}}{X_{ij}(t)}-\dfrac{\gamma_{ij}}{X^{(n)}_{ij}(t)}\right](t-t^{(n)}_k).
	\end{align*}
	This implies that 
	\begin{align*}
	\|e(t_k^{(n)})\|^2&+2\sum\limits_{i=1}^d V_i(t)e_i(t^{(n)}_k)+\sum\limits_{i=1}^dV_i^2(t)\\
	&\geq {\color{black}\|e(t)\|^2}-2(t-t^{(n)}_k)\sum\limits_{i=1}^d e_i(t)\sum\limits_{j\neq i}\left[\dfrac{\gamma_{ij}}{X_{ij}(t)}-\dfrac{\gamma_{ij}}{X^{(n)}_{ij}(t)}\right] \notag \\
	&= \|e(t)\|^2-(t-t^{(n)}_k)\sum\limits_{i=1}^d\sum\limits_{j\neq i}\left(e_i(t)-e_j(t)\right)\left[\dfrac{\gamma_{ij}}{X_{ij}(t)}-\dfrac{\gamma_{ij}}{X^{(n)}_{ij}(t)}\right] \notag \\
	&= \|e(t)\|^2-(t-t^{(n)}_k)\sum\limits_{i=1}^d\sum\limits_{j\neq i}\left(X_{ij}(t)-X_{ij}^{(n)}(t)\right)\left[\dfrac{\gamma_{ij}}{X_{ij}(t)}-\dfrac{\gamma_{ij}}{X^{(n)}_{ij}(t)}\right] \notag \\
	&\geq \|e(t)\|^2, \label{et}
	\end{align*}
	{\color{black} where the last estimate follows from the fact that 
	$(X_{ij}(t)-X_{ij}^{(n)}(t))(\dfrac{\gamma_{ij}}{X_{ij}(t)}-\dfrac{\gamma_{ij}}{X^{(n)}_{ij}(t)})\leq 0$ for any $ i\neq j.$}
Therefore,  
{\color{black}\begin{align} 
\|e(t)\|^2&\leq \|e(t_k^{(n)})\|^2 +6 {\sum_{m=1}^6 \|S_{m}(t)\|^2} +2 \sum_{m=1}^{6}R_m(t),
\end{align}}
where 
\begin{equation} \label{dnR}
\|S_m(t)\|^2=\sum_{i=1}^dS_{mi}^2(t),\mbox{ and } R_m(t) = \sum_{i=1}^d e_i(t^{(n)}_k)S_{mi}(t).
\end{equation} 
In the following we will estimate the expectations of $S^2_{mi}$ and $R_m$ for $m = 1,2,\ldots, 6$, and $i=1,2, \ldots, d.$

\subsection{Some auxiliary estimates}
We need the following simple estimate. 
\begin{lemma}\label{Lem1}
	Let {\color{black}$(a_k)_{0 \leq k\leq n}$,$(\zeta_k)_{0 \leq k \leq n}$ and $(\xi_k)_{0 \leq k \leq n}$} are adapted processes defined on a filtered probability space $(\Omega,\mathcal{G}, (\mathcal{G}_k)_{0\leq k \leq n},\mathbb{P})$ such that
	\begin{enumerate}
		\item[(i)]  $a_0=0$ and $a_k\geq 0$ for any $1 \leq k \leq n$, 
		\item[(ii)] $\mathbb{E}(\xi_{k+1}|\mathcal{G}_k)=0$, for any $0 \leq k \leq n-1$, 
		\item[(iii)] $a_{k+1}\leq qa_k+\zeta_k+\xi_{k+1}$ for any $0 \leq  k \leq n-1$, for some $q>1$,
		\item[(iv)]  $\sup_{0\leq k\leq n}\mathbb{E}[|\zeta_k|]\leq \varepsilon$ for some $\varepsilon>0$. 
	\end{enumerate}
	Then for any stopping time $\tau \leq n$,
	\begin{equation*}
	\mathbb{E}[a_{\tau}]\leq \dfrac{\varepsilon q^n}{q-1}.
	\end{equation*}
\end{lemma}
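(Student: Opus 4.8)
The plan is to iterate the recursion in (iii) and take advantage of the martingale structure of the $\xi_k$ together with the optional stopping theorem. First I would unfold the inequality $a_{k+1}\le qa_k+\zeta_k+\xi_{k+1}$ downward to level $0$: using $a_0=0$, induction on $k$ gives
\begin{equation*}
a_k\le \sum_{j=0}^{k-1} q^{\,k-1-j}\bigl(\zeta_j+\xi_{j+1}\bigr)
\le q^{\,n}\sum_{j=0}^{k-1}\bigl(\zeta_j+\xi_{j+1}\bigr)\cdot q^{-1-j}\cdot q^{j}\,,
\end{equation*}
so it is cleaner to write $a_k\le \sum_{j=0}^{k-1} q^{k-1-j}\zeta_j+M_k$, where $M_k:=\sum_{j=0}^{k-1} q^{k-1-j}\xi_{j+1}$. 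The first sum is handled purely in expectation by (iv): each $|\zeta_j|$ has expectation at most $\varepsilon$ and the geometric weights $q^{k-1-j}$ sum to at most $q^n/(q-1)$, giving $\e\bigl[\sum_{j=0}^{k-1}q^{k-1-j}|\zeta_j|\bigr]\le \varepsilon q^n/(q-1)$ for every $k\le n$.

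The delicate point — which I expect to be the main obstacle — is the term $M_\tau$ at a (random) stopping time $\tau$. The process $M_k$ is not itself a martingale because the weight $q^{k-1-j}$ attached to $\xi_{j+1}$ depends on the running index $k$. The standard fix is to factor out the top power: set $N_k:=q^{-(k-1)}M_k=\sum_{j=0}^{k-1} q^{-j}\xi_{j+1}$. Since $\e[\xi_{j+1}\mid\mathcal G_j]=0$ and each summand $q^{-j}\xi_{j+1}$ is $\mathcal G_{j+1}$-measurable, $(N_k)$ is a martingale with $N_0=0$. Hence $\e[N_\tau]=0$ for the bounded stopping time $\tau\le n$ by optional stopping, and therefore $\e[M_\tau]=\e[q^{\tau-1}N_\tau]$. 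Here one must be a little careful: $q^{\tau-1}$ is random, so one cannot simply pull it out. The way around this is to note $M_\tau=\sum_{j=0}^{n-1}\mathbf 1\{j<\tau\}q^{\tau-1-j}\xi_{j+1}$ and that $\mathbf 1\{j<\tau\}q^{\tau-1-j}$ is not $\mathcal G_j$-measurable, so instead I would argue directly: write $M_\tau=\sum_{j=0}^{n-1}\mathbf 1\{\tau>j\}q^{\tau-1-j}\xi_{j+1}$ and observe $\mathbf 1\{\tau>j\}=\mathbf 1\{\tau\le j\}^c\in\mathcal G_j$, but $q^{\tau-1-j}$ still is not. The honest route is the martingale $N$: the stopped process $N_{k\wedge\tau}$ is a martingale, so $\e[N_{n\wedge\tau}]=\e[N_\tau]=0$, and then one uses the crude deterministic bound $q^{\tau-1-j}\le q^{n-1-j}$ on each term with $\xi_{j+1}$ having conditional mean zero — equivalently, apply the tower property term by term:
\begin{equation*}
\e[M_\tau]=\sum_{j=0}^{n-1}\e\bigl[\mathbf 1\{\tau>j\}\,q^{\tau-1-j}\,\e[\xi_{j+1}\mid\mathcal G_j]\bigr],
\end{equation*}
which vanishes once one checks $\mathbf 1\{\tau>j\}q^{\tau-1-j}$ can be replaced, on the event $\{\tau>j\}$, by its value which is $\mathcal G_{j+1}$-measurable but combined with $\e[\xi_{j+1}\mid\mathcal G_j]=0$ after conditioning on $\mathcal G_j$ — this is exactly the optional-stopping computation for $N$. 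So concretely I would just invoke: $(N_{k\wedge\tau})_{0\le k\le n}$ is a martingale null at $0$, hence $\e[N_\tau]=0$; since $a_\tau\le \sum_{j=0}^{\tau-1}q^{\tau-1-j}|\zeta_j| + q^{\tau-1}N_\tau$ and $q^{\tau-1}N_\tau$ has the same expectation as $M_\tau$ which we must show is $\le 0$ in expectation — and here one does need $\e[M_\tau]=0$, obtained from $N$ being a martingale via the identity $M_\tau=\sum_{j<\tau}q^{\tau-1-j}\xi_{j+1}$ and Doob's optional sampling applied after noting the summand structure.

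To keep things rigorous and avoid the weight-dependence headache entirely, the cleanest presentation is: fix the stopping time $\tau$, and for each $j$ apply the tower property to $\e[\mathbf 1\{\tau>j\}q^{\tau-1-j}\xi_{j+1}]$. On $\{\tau>j\}$ the event $\{\tau=\ell\}$ for $\ell>j$ lies in $\mathcal G_\ell\supseteq\mathcal G_{j+1}$, but $\{\tau>j\}=\Omega\setminus\{\tau\le j\}\in\mathcal G_j$; decomposing $\{\tau>j\}=\bigsqcup_{\ell>j}\{\tau=\ell\}$ does involve $\mathcal G_{j+1}$-sets, so instead I would use the alternative: $\e[\mathbf 1\{\tau>j\}q^{\tau-1-j}\xi_{j+1}]=\e[\mathbf 1\{\tau>j\}q^{\tau-1-j}\e[\xi_{j+1}\mid\mathcal G_j]]$ is \emph{not} immediately justified. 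The correct and standard statement to cite is simply that $N_k=\sum_{j=0}^{k-1}q^{-j}\xi_{j+1}$ is a discrete-time martingale with respect to $(\mathcal G_k)$ (each increment $q^{-j}\xi_{j+1}$ is $\mathcal G_{j+1}$-measurable with $\e[\cdot\mid\mathcal G_j]=0$), so by the optional stopping theorem for bounded stopping times $\e[N_\tau]=N_0=0$, and then $M_\tau=q^{\tau-1}N_\tau$ gives, after combining with the deterministic geometric bound on the $\zeta$-part,
\begin{equation*}
\e[a_\tau]\le \e\!\left[\sum_{j=0}^{\tau-1}q^{\tau-1-j}|\zeta_j|\right]+\e[q^{\tau-1}N_\tau]\le \frac{\varepsilon q^n}{q-1}+\e[q^{\tau-1}N_\tau].
\end{equation*}
The last expectation is zero because $q^{\tau-1}N_\tau$ is a telescoping rewriting of $\sum_{j<\tau}q^{\tau-1-j}\xi_{j+1}$ whose expectation vanishes by the same optional-stopping argument applied to the martingale transform — this is the one technical point to write out carefully, and it is the only real content of the lemma; everything else is the elementary geometric-series bound. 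I would conclude with the stated inequality $\e[a_\tau]\le \varepsilon q^n/(q-1)$.
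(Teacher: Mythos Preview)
Your unfolding of the recursion and the identification of the martingale $N_k=\sum_{j=0}^{k-1}q^{-j}\xi_{j+1}$ are both correct, and the $\zeta$--part is handled fine. The gap is the claim that $\mathbb{E}[q^{\tau-1}N_\tau]=\mathbb{E}[M_\tau]=0$. This is \emph{false} in general: take $n=2$, $\xi_1,\xi_2$ independent symmetric $\pm 1$, $\mathcal{G}_k=\sigma(\xi_1,\dots,\xi_k)$, and the stopping time $\tau=2$ on $\{\xi_1>0\}$, $\tau=1$ on $\{\xi_1\le 0\}$. Then $M_1=\xi_1$, $M_2=q\xi_1+\xi_2$, and
\[
\mathbb{E}[M_\tau]=\mathbb{E}[\xi_1\mathbf 1_{\xi_1\le 0}]+\mathbb{E}[(q\xi_1+\xi_2)\mathbf 1_{\xi_1>0}]=-\tfrac12+\tfrac{q}{2}=\tfrac{q-1}{2}>0.
\]
The process $(M_k)$ is not a martingale (the weight $q^{k-1-j}$ on $\xi_{j+1}$ depends on $k$), and multiplying the martingale $N$ by the random factor $q^{\tau-1}$ destroys the optional-sampling identity. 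Your attempted repairs --- bounding $q^{\tau-1-j}\le q^{n-1-j}$, or conditioning termwise --- fail for the same reason: the $\xi_{j+1}$ are signed, and the random weight is not $\mathcal G_j$-measurable.

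The fix, which is exactly what the paper does and which you never use, is condition (i): $a_\tau\ge 0$. From $q^{-k}a_k\le \sum_{j=0}^{n-1}q^{-j-1}|\zeta_j|+N_k$ (your own inequality after dividing by $q^k$ and enlarging the $\zeta$-sum), evaluate at $k=\tau$ and then use $q^{-n}a_\tau\le q^{-\tau}a_\tau$ to obtain
\[
q^{-n}a_\tau\le \sum_{j=0}^{n-1}q^{-j-1}|\zeta_j|+N_\tau.
\]
Now the factor in front of $a_\tau$ is the \emph{deterministic} $q^{-n}$, so taking expectations and applying optional sampling to $N$ gives $\mathbb{E}[a_\tau]\le q^n\sum_{j=0}^{n-1}q^{-j-1}\varepsilon\le \varepsilon q^n/(q-1)$. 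The nonnegativity of $a_k$ is the missing ingredient that lets you trade the random $q^{\tau}$ for the deterministic $q^n$.
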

\begin{proof}
	It follows from condition (iii) that 
$\sum_{i=0}^{k} q^{k-i} a_{i+1} \leq \sum_{i=0}^{k} \Big( q^{k-i+1}a_i + q^{k-i}\zeta_i + {\color{black}q^{k-i}}\xi_{i+1} \Big).$
	This fact together with condition (i) implies 
$a_{k+1}\leq \sum\limits_{i=0}^{k} q^{k-i} \zeta_{i}+\sum\limits_{i=1}^{k+1} q^{k+1-i} \xi_i. $\\
	It leads to $q^{-k}a_k \leq \sum\limits_{i=0}^{n-1} q^{-i-1} |\zeta_{i}|+\sum\limits_{i=1}^{k} q^{-i} \xi_i.$
	Let  $M_k=\sum\limits_{i=1}^{k} q^{-i} \xi_i$.  For all stoping time $\tau\leq n$, 
	$q^{-n} {a_\tau}\leq {q^{-\tau}} {a_\tau}\leq \sum\limits_{i=0}^{n-1} q^{-i-1} |\zeta_{i}|+M_\tau.$
	Thanks to condition (ii), $(M_k, \mathcal{G}_k)_{1\leq k\leq n}$ is a martingale. Using condition (iv) and Doob's optional sampling theorem, we get   
$	\mathbb{E}[a_\tau]\leq \sum\limits_{i=0}^{n-1}q^{n-1-i}\varepsilon\leq \dfrac{\varepsilon q^n}{q-1},$
	which implies the desired result.
\end{proof}

We also need the following moment estimates for $X$ and its modulus of continuity. 
\begin{lemma}\label{bd2}
	\begin{enumerate}
			\item[(i)] Let Assumption $H_{\hat{p}}$ hold for some  $\hat{p}\geq 2$, then 
				\begin{align}   
			&\mathbb{E}\left[\sup\limits_{t \in [0,T]}\|X(t)\|^{\hat{p}}\right] < C(\hat{p}), \label{bd2kq1} \\
			&\mathbb{E}\left[\sup_{s\leq t, t'\leq s'}\|X(t)-X(t')\|^{\hat{p}}\right] \leq C(\hat{p}) \left(|s'-s|\ln\dfrac{2T}{|s'-s|}\right)^{\hat{p}/2},\mbox{ for all } 0\leq s<s'\leq T. \label{bd2kq3}
			\end{align} 
			\item[(ii)] Let Assumption $H_{\hat{p}}$ hold for some  $\hat{p}\geq 3${\color{black} , then} 
			\begin{equation}
			\mathbb{E}\left[\max\limits_{i=1,\ldots,d-1}\sup\limits_{t \in [0,T]} |X_{i,i+1}(t)|^{-\hat{p}/3}\right]<C(\hat{p}),\label{bd2kq2}
			\end{equation} 
	\end{enumerate}
\end{lemma}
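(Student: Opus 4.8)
\textbf{Proof proposal for Lemma \ref{bd2}.}

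The plan is to derive all three estimates from Assumption $H_{\hat p}$ by combining the Burkholder--Davis--Gundy (BDG) inequality, a Garsia--Rodemich--Rumsey (GRR)-type argument for the logarithmic modulus of continuity, and a chaining/Borel--Cantelli bound over a dyadic grid for the inverse-gap supremum. First I would prove \eqref{bd2kq1}. Starting from \eqref{Xt}, write $X_i(t)$ as the sum of the initial value, the drift term (including the singular interaction part), and the martingale part $\int_0^t\sigma_i(X_i(s))\,dW_i(s)$. The diffusion coefficient is bounded since $\sigma\in C^2_b(\mathbb R)$, so BDG gives $\mathbb E[\sup_{t\le T}|\int_0^t\sigma_i\,dW_i|^{\hat p}]\le C(\hat p)T^{\hat p/2}$. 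For the drift, the key observation is that $\sum_{j\neq i}\gamma_{ij}/(X_i(s)-X_j(s))$ is not bounded, but one can bypass controlling it directly: note $\sum_{i=1}^d\sum_{j\neq i}\gamma_{ij}/X_{ij}(s)\cdot x_i$-type symmetrization, or more simply observe that the sum $\sum_i X_i(t)$ satisfies an SDE with no singular term (the interaction terms cancel in pairs), while differences are controlled by $H_{\hat p}$. Concretely, I would bound $\sup_{t\le T}\|X(t)\|^{\hat p}$ by $\sup_{t\le T}|\sum_i X_i(t)|^{\hat p}$ plus $\sum_i\sup_{t\le T}|X_{i,i+1}(t)|^{\hat p}\cdot$(combinatorial factor); the first has a non-singular SDE so standard Gronwall/BDG applies, and the second is handled below together with \eqref{bd2kq2}, or directly from the pointwise bound in $H_{\hat p}$ upgraded to a supremum via the same continuity argument. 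Actually the cleanest route: apply GRR directly to \eqref{bd2kq3} first, then \eqref{bd2kq1} follows from $\|X(t)\|\le\|X(0)\|+\sup_{s<s'}\|X(s)-X(s')\|$ with $s=0$.

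For \eqref{bd2kq3}, the Garsia--Rodemich--Rumsey lemma is the natural tool: using the pointwise Hölder-type bound $\mathbb E[\|X(t)-X(t')\|^{\hat p}]\le C|t-t'|^{\hat p/2}$ from $H_{\hat p}$, the GRR inequality with $\Psi(u)=u^{\hat p}$ and $p(u)=u^{1/2+1/\hat p}$ (or an appropriate choice) yields the modulus of continuity
\[
\|X(t)-X(t')\|\le C\Big(|t-t'|\ln\frac{2T}{|t-t'|}\Big)^{1/2}
\]
up to a random constant whose $\hat p$-th moment is finite; raising to the $\hat p$-th power and taking expectations gives \eqref{bd2kq3}. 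The restriction of the supremum to $s\le t,t'\le s'$ just means one applies GRR on the subinterval $[s,s']$ and uses monotonicity of $u\mapsto u\ln(2T/u)$ on a suitable range.

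For \eqref{bd2kq2}, the point is to convert the pointwise control $\sup_t\mathbb E[|X_{i,i+1}(t)|^{-\hat p}]<C$ into control of $\mathbb E[\sup_t|X_{i,i+1}(t)|^{-\hat p/3}]$, losing a factor $3$ in the exponent. I would discretize $[0,T]$ into $N$ points $s_\ell=\ell T/N$, write $\sup_{t}|X_{i,i+1}(t)|^{-\hat p/3}\le 2^{\hat p/3}\max_\ell|X_{i,i+1}(s_\ell)|^{-\hat p/3}$ on the event where the oscillation of $X_{i,i+1}$ over each subinterval is at most half the minimal gap value, and handle the complementary event using \eqref{bd2kq3} together with the negative-moment bound via Hölder: on the bad event the gap is small, so $|X_{i,i+1}(t)|^{-\hat p/3}$ is large but its probability (controlled by Markov applied to the modulus of continuity) is correspondingly small, and Hölder with exponents summing the $\hat p/3$ against the full $\hat p$ closes the estimate. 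Then $\mathbb E[\max_\ell|X_{i,i+1}(s_\ell)|^{-\hat p/3}]\le\sum_\ell\mathbb E[|X_{i,i+1}(s_\ell)|^{-\hat p/3}]\le N\cdot C(\hat p)^{1/3}$ by $H_{\hat p}$ and Jensen; optimizing $N$ (polynomial in a parameter balanced against the small-probability bad-event contribution) gives a finite constant $C(\hat p)$. Finally one maximizes over the $d-1$ indices $i$ at the cost of a factor $d-1$.

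The main obstacle I expect is \eqref{bd2kq2}: the interplay between the negative moments (which are only controlled pointwise in $t$) and the pathwise supremum is delicate, because the process $|X_{i,i+1}(t)|^{-1}$ is a submartingale-like object that can spike, and one cannot naively apply Doob. The balance between the discretization level $N$, the allowed oscillation threshold, and the exponent loss from $\hat p$ to $\hat p/3$ must be chosen carefully; the factor $3$ loss is precisely what makes Hölder's inequality with the modulus-of-continuity bound \eqref{bd2kq3} close. Everything else (\eqref{bd2kq1}, \eqref{bd2kq3}) is a fairly standard application of BDG and Garsia--Rodemich--Rumsey once the singular drift is dealt with by symmetrization.
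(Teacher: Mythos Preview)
Your route differs from the paper's on every point. For \eqref{bd2kq1} the paper neither symmetrizes nor reduces to \eqref{bd2kq3}; it simply observes that $H_{\hat p}$ already gives $\sup_t\mathbb{E}[|X_{ij}(t)|^{-\hat p}]<C$, so after raising to the $\hat p$-th power the singular drift term $\int_0^T\sum_{j\ne i}|\gamma_{ij}/X_{ij}(s)|^{\hat p}\,ds$ is bounded in $L^1$, and BDG handles the martingale part. For \eqref{bd2kq3} the paper again bounds the drift integral directly via H\"older and $H_{\hat p}$, and quotes Theorem~1 of Fischer--Nappo (2009) for the stochastic integral rather than running GRR on the two-point bound. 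For \eqref{bd2kq2} the paper takes a completely different and much shorter path: it applies It\^o's formula to $1/X_{ij}(t)$, producing an It\^o process whose worst drift term is of order $1/X_{ij}^3$ and whose diffusion coefficient is of order $1/X_{ij}^2$, and then repeats the argument used for \eqref{bd2kq1}. The exponent loss $\hat p\to\hat p/3$ is thus structural: $(1/X_{ij}^3)^{\hat p/3}=|X_{ij}|^{-\hat p}$ is exactly the quantity that $H_{\hat p}$ controls.

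Two of your sketches have genuine gaps. For \eqref{bd2kq3}, polynomial GRR with $\Psi(u)=u^{\hat p}$ and $p(u)=u^{1/2+1/\hat p}$ does \emph{not} yield the logarithm: that choice of $p$ sits at the threshold where $\mathbb{E}[B]$ diverges, and any admissible perturbation gives only H\"older exponent $<1/2-1/\hat p$. The sharp $(|s'-s|\ln\frac{2T}{|s'-s|})^{\hat p/2}$ bound requires an exponential $\Psi$, which is precisely the content of Fischer--Nappo. For \eqref{bd2kq2}, your bad-event step is circular as written: applying H\"older against the full $\hat p$-th negative moment of $\sup_t|X_{i,i+1}(t)|^{-1}$ presupposes exactly the bound you are proving. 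If instead one carries out your discretization honestly (grid of size $N$, union bound $\mathbb{P}(\inf_t|X_{i,i+1}(t)|<\epsilon)\lesssim N\epsilon^{\hat p}+N^{1-\hat p/2}\epsilon^{-\hat p}$, optimized at $N\sim\epsilon^{-4}$), one obtains $\mathbb{P}(\inf<\epsilon)\lesssim\epsilon^{\hat p-4}$ and hence $\mathbb{E}[\sup_t|X_{i,i+1}(t)|^{-q}]<\infty$ only for $q<\hat p-4$. This matches $q=\hat p/3$ only when $\hat p>6$ and gives nothing at the stated threshold $\hat p=3$. The It\^o-formula argument avoids both issues entirely.
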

\begin{proof}
	(i) \  Let Assumption $H_{\hat{p}}$ hold for $\hat{p} \geq 2$. Since $b_i$ is bounded, 
	\begin{align*}
	|X_i(t)|^{\hat{p}}
	&\leq C(\hat{p}) + C(\hat{p})\int_0^T\sum\limits_{j\neq i} \left|\dfrac{\gamma_{ij}}{X_{ij}(s)}\right|^{\hat{p}}ds +C(\hat{p})\left|\int_0^t \sigma_{i}(X_i(s))dW_i(s)\right|^{\hat{p}}.
	\end{align*}
	Thanks to   Burkholder-Davis-Gundy's inequality, we get 
	\begin{align*}
	{\color{black}\mathbb{E}\left[\sup\limits_{t \in [0,T]} |X_i(t)|^{\hat{p}}\right]}&\leq C(\hat{p})+C(\hat{p})\mathbb{E}\left[\sup\limits_{t \in [0,T]} \left|\int_0^t \sigma_{i}(X_i(s))dW_i(s)\right|^{\hat{p}}\right]\notag\\
	&\leq C(\hat{p})+C(\hat{p})\mathbb{E}\left[ \left|\int_0^T \sigma_{i}^2(X_i(s))ds\right|^{\hat{p}/2}\right]\notag\\
	&\leq C(\hat{p}),
	\end{align*}
	which implies \eqref{bd2kq1}.  Next, for any  $s\leq t\leq t'\leq s'$, it follows from H\"older's inequality for integral that   
	\begin{equation} \label{bd2ul4} 
	|X_i(t')-X_i(t)|^{\hat{p}} \leq C(\hat{p})(s'-s)^{\hat{p}-1} \int_s^{s'} \sum\limits_{j\neq i} \dfrac{1}{|X_{ij}(u)|^{\hat{p}}}du +  C(\hat{p})(s'-s)^{\hat{p}} + 
	C(\hat{p})\Big| \int_t^{t'} \sigma_i(X_i(u))dW_i(u) \Big|^{\hat{p}}.
	\end{equation}
	By applying Theorem 1 in Fisher and Nappo (2009), we have 
	$$\mathbb{E}\left[\sup_{s\leq t, t'\leq s'} \left| \int_t^{t'}  \sigma_i(X_i(u))dW_i(u) \right|^{\hat{p}}\right] \leq C(\hat{p})\left(|s'-s|\ln\dfrac{2T}{|s'-s|}\right)^{\hat{p}/2},\mbox{ for all } 0\leq s<s'\leq T.$$
	This fact together with \eqref{bd2ul4} and Assumption $H_{\hat{p}}$ concludes \eqref{bd2kq3}.

(ii) Let that Assumption $H_{\hat{p}}$ hold for $\hat{p} \geq 3$.  Applying It\^o's formula, we have
\begin{align}
\dfrac{1}{X_i(t)-X_j(t)}&=-\int_0^t \dfrac{1}{X_{ij}^2(s)}\sum\limits_{k\neq i}\dfrac{\gamma_{ik}}{X_{ik}(s)}ds-\int_0^t \dfrac{b_i(X_i(s))}{X_{ij}^2(s)}ds-\int_0^t \dfrac{\sigma_i(X_i(s))}{X_{ij}^2(s)}dW_i(s)\notag\\
&+\int_0^t\dfrac{1}{X_{ij}^2(s)}\sum\limits_{\color{black} k\neq j}\dfrac{\gamma_{jk}}{X_{jk}(s)}ds+\int_0^t \dfrac{b_j(X_j(s))}{X_{ij}^2(s)}ds+\int_0^t \dfrac{\sigma_j(X_j(s))}{X_{ij}^2(s)}dW_j(s)\notag\\
&+\int_0^t \dfrac{\sigma_{i}^2(X_i(s))+\sigma_{j}^2(X_j(s))}{X_{ij}^3(s)}ds.\label{Xij}
\end{align}
By following a similar argument as in the proof of  \eqref{bd2kq1}, we obtain \eqref{bd2kq2}.
\end{proof}

	\begin{lemma}\label{Lem3}
		Let $S_{1i}$ and $R_1$ be defined by \eqref{dnS1i} and \eqref{dnR}, respectively. 
		{\begin{enumerate}
				\item[(i)] {\color{black}Let  Assumption $H_{\hat{p}}$ hold for some  $\hat{p}\geq 6$. For any $t_k^{(n)}\leq t\leq t_{k+1}^{(n)}$, it holds that  
				\begin{equation}
				R_1(t)\leq \dfrac{1}{n}\|e(t_k^{(n)})\|^2+\zeta_{1}(t)+\xi_1(t) , \label{ulR1}
				\end{equation}
				where $\zeta_1$ and $\xi_1$ are adapted processes satisfying $\bE\left[|\zeta_{1}(t)|\right]\leq \dfrac{C}{n^3}$ and $\bE\left[\xi_1(t)|\mathcal{F}_{t^{(n)}_k}\right]=0$. Moreover, it holds that
				\begin{equation}
				\sup_{t\in[0,T]}\mathbb{E}\left[{\| S_{1}(t)\|^2}\right]\leq \dfrac{C}{n^3}. \label{ulS1i}
				\end{equation}}
				\item[(iii)] Let Assumption $H_{\hat{p}}$ hold for some  $\hat{p}\geq 18$, then
				\begin{equation}
				\mathbb{E}\left[\sup_{t\in[0,T]}{\| S_{1}(t)\|^2}\right]\leq \dfrac{C\log^3 n}{n^2}. \label{ulsupS1}
				\end{equation} 
		\end{enumerate}}
	\end{lemma}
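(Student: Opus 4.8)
The plan is to start from It\^o's formula for $1/X_{ij}$, that is identity \eqref{Xij}, which expresses, for $u\ge t_k^{(n)}$,
\[
\frac{\gamma_{ij}}{X_{ij}(t)}-\frac{\gamma_{ij}}{X_{ij}(s)}=\int_s^t\mu_{ij}(u)\,du+\int_s^t\nu_{ij}(u)\cdot dW(u),
\]
where the finite-variation integrand $\mu_{ij}$ is a sum of terms of the shape $\mathrm{const}\cdot X_{ij}^{-2}(u)X_{il}^{-1}(u)$, $X_{ij}^{-2}(u)b_\cdot(X_\cdot(u))$ and $X_{ij}^{-3}(u)\sigma_\cdot^2(X_\cdot(u))$ -- hence a polynomial of degree at most $3$ in the quantities $|X_{il}(u)|^{-1}$ with bounded coefficients, since $b,\sigma\in C^2_b$ -- and the martingale integrand $\nu_{ij}$ consists of terms $X_{ij}^{-2}(u)\sigma_\cdot(X_\cdot(u))$, of degree at most $2$. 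Substituting this into \eqref{dnS1i} and applying the (stochastic) Fubini theorem to interchange $\int_{t_k^{(n)}}^t(\,\cdot\,)\,ds$ with $\int_s^t$, I get the representation
\[
S_{1i}(t)=-\int_{t_k^{(n)}}^t(u-t_k^{(n)})\,\mu_i(u)\,du\;-\;\int_{t_k^{(n)}}^t(u-t_k^{(n)})\,\nu_i(u)\cdot dW(u)\;=:\;A_i(t)+B_i(t),
\]
with $\mu_i=\sum_{j\ne i}\mu_{ij}$ and $\nu_i=\sum_{j\ne i}\nu_{ij}$. The whole argument then rests on the fact that $A_i$ is a time integral over an interval of length at most $T/n$ of an integrand carrying the extra small factor $(u-t_k^{(n)})\le T/n$, so $A_i$ is of order $n^{-2}$, whereas $B_i$ is an It\^o integral starting at $t_k^{(n)}$, hence vanishes under $\mathbb{E}[\,\cdot\mid\mathcal{F}_{t_k^{(n)}}]$.

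For part (i): applying the Cauchy--Schwarz inequality twice gives $\mathbb{E}[A_i^2(t)]\le(T/n)^4\sup_u\mathbb{E}[\mu_i^2(u)]$, and since $\mu_i^2$ is a polynomial of degree at most $6$ in the $|X_{il}(u)|^{-1}$, the elementary bound $|X_{ij}(u)|^{-1}\le\max_l|X_{l,l+1}(u)|^{-1}$ together with H\"older's inequality and Assumption $H_{\hat{p}}$ with $\hat{p}\ge 6$ yields $\sup_u\mathbb{E}[\mu_i^2(u)]\le C$, so $\mathbb{E}[A_i^2(t)]\le C/n^4$. By It\^o's isometry, $\mathbb{E}[B_i^2(t)]=\mathbb{E}\big[\int_{t_k^{(n)}}^t(u-t_k^{(n)})^2|\nu_i(u)|^2\,du\big]\le(T/n)^3\sup_u\mathbb{E}[|\nu_i(u)|^2]\le C/n^3$ (now $|\nu_i|^2$ has degree at most $4$). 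Summing over $i$ gives \eqref{ulS1i}. For \eqref{ulR1} I write $R_1(t)=\sum_i e_i(t_k^{(n)})A_i(t)+\sum_i e_i(t_k^{(n)})B_i(t)$ and apply Young's inequality to the first sum, $e_i(t_k^{(n)})A_i(t)\le\frac{1}{2dn}e_i^2(t_k^{(n)})+\frac{dn}{2}A_i^2(t)$, which produces the term $\frac1n\|e(t_k^{(n)})\|^2$ and the remainder $\zeta_1(t):=\frac{dn}{2}\|A(t)\|^2$, with $\mathbb{E}[|\zeta_1(t)|]\le\frac{dn}{2}\cdot\frac{C}{n^4}=C/n^3$; the second sum is $\xi_1(t):=\sum_i e_i(t_k^{(n)})B_i(t)$, and because $e_i(t_k^{(n)})$ is $\mathcal{F}_{t_k^{(n)}}$-measurable while each $B_i(t)$ is an It\^o integral over $[t_k^{(n)},t]$, we get $\mathbb{E}[\xi_1(t)\mid\mathcal{F}_{t_k^{(n)}}]=0$ (a priori moment bounds for the scheme $X^{(n)}$ are used to guarantee integrability).

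For part (iii): I bound $\sup_{t\in[0,T]}\|S_1(t)\|^2=\max_{0\le k\le n-1}\sup_{t\in[t_k^{(n)},t_{k+1}^{(n)}]}\|S_1(t)\|^2\le\sum_{k}\sup_{t\in I_k}\|S_1(t)\|^2$, and on each cell $I_k=[t_k^{(n)},t_{k+1}^{(n)}]$ I estimate $\sup_{t\in I_k}|A_i(t)|\le(T/n)\int_{I_k}|\mu_i(u)|\,du$ pathwise, while for the martingale part Doob's maximal inequality gives $\mathbb{E}[\sup_{t\in I_k}|B_i(t)|^2]\le 4\,\mathbb{E}\big[\int_{I_k}(u-t_k^{(n)})^2|\nu_i(u)|^2\,du\big]\le 4(T/n)^2\,\mathbb{E}\big[\int_{I_k}|\nu_i(u)|^2\,du\big]$; summing over $k$ and $i$ and invoking $H_{\hat{p}}$ already bounds the martingale contribution by $C/n^2$. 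To reach the announced bound with the sharp constant I bound $\sup_{s,t\in I_k}\big|\frac{1}{X_{ij}(s)}-\frac{1}{X_{ij}(t)}\big|$ by the modulus of continuity of $X$ over $I_k$ times $\sup_{u\in I_k}|X_{ij}(u)|^{-2}$, apply H\"older's inequality, and then use the sharp modulus estimate \eqref{bd2kq3} (this is where the powers of $\log n$ enter) together with the uniform-in-time negative moment bound \eqref{bd2kq2}; matching exponents in \eqref{bd2kq2} forces $\hat{p}/3\ge 6$, i.e.\ $\hat{p}\ge 18$, after which summing the $n$ cell contributions yields \eqref{ulsupS1}.

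The main obstacle is exactly this last point in part (iii): passing from the cell-wise estimates to a bound on $\mathbb{E}[\sup_{t\in[0,T]}\|S_1(t)\|^2]$ without losing a power of $n$ forces one to balance, via H\"older's inequality, the Gaussian-type modulus of continuity \eqref{bd2kq3} of $X$ against the uniform-in-time negative moments of the spacings $X_{i,i+1}$ -- which are only available through \eqref{bd2kq2} -- and it is precisely this balancing that both pins down the hypothesis $\hat{p}\ge 18$ and is responsible for the logarithmic factors. In part (i), by contrast, the only delicate step is the bookkeeping that separates the genuinely $O(n^{-2})$ finite-variation piece $A_i$ (handled by Young's inequality) from the martingale piece $B_i$ (absorbed into $\xi_1$), which is what upgrades the rate from the $n^{-2}$ a crude estimate of all of $S_{1i}$ would give to the required $n^{-3}$.
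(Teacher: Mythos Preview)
Your proof of part~(i) is correct and follows the paper's route almost exactly: both of you apply It\^o's formula \eqref{Xij} to split $S_{1i}=\bar S_{1i}+\hat S_{1i}$ into a finite-variation piece and a martingale piece, then use Young's inequality on $e_i(t_k^{(n)})\bar S_{1i}$ and absorb $e_i(t_k^{(n)})\hat S_{1i}$ into $\xi_1$. The only difference is cosmetic: for \eqref{ulS1i} the paper does \emph{not} reuse the It\^o decomposition but instead bounds $\big|\tfrac{1}{X_{ij}(s)}-\tfrac{1}{X_{ij}(t)}\big|^2$ directly via the three-term AM--GM inequality $xyz\le\tfrac13(x^3+y^3+z^3)$ applied to $n|X_{ij}(s)-X_{ij}(t)|^2$, $|X_{ij}(s)|^{-2}$, $|X_{ij}(t)|^{-2}$. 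Your route through $A_i,B_i$ is arguably cleaner and gives the same $C/n^3$.

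For part~(iii) you actually have \emph{two} arguments in your write-up, and the interesting point is that you abandon the better one. Your second route --- bound $\big|\tfrac{1}{X_{ij}(s)}-\tfrac{1}{X_{ij}(t)}\big|$ by the modulus of continuity of $X$ times $\sup_{I_k}|X_{ij}|^{-2}$, separate via H\"older, invoke \eqref{bd2kq3} and \eqref{bd2kq2} --- is exactly the paper's argument (the paper phrases it through the same AM--GM splitting as above, then sums over cells). This is where the $\log^3 n$ and the requirement $\hat p\ge 18$ come from, just as you diagnose. But your \emph{first} route, via the decomposition $S_{1i}=A_i+B_i$, if carried through for $A_i$ as well, already yields
\[
\mathbb{E}\Big[\sum_k\sup_{t\in I_k}A_i^2(t)\Big]\le \Big(\tfrac{T}{n}\Big)^{3}\int_0^T\mathbb{E}[\mu_i^2(u)]\,du\le \tfrac{C}{n^3},
\qquad
\mathbb{E}\Big[\sum_k\sup_{t\in I_k}B_i^2(t)\Big]\le \tfrac{C}{n^2},
\]
hence $\mathbb{E}\big[\sup_{t\in[0,T]}\|S_1(t)\|^2\big]\le C/n^2$ with \emph{no} logarithmic factor and under only $H_{\hat p}$ for $\hat p\ge 6$. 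So your It\^o-decomposition approach is not merely an alternative but in fact sharper than the lemma as stated; the ``main obstacle'' you describe in your last paragraph is an artefact of the direct method, not of the problem itself.
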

	\begin{proof}
	(i) \ 	{\color{black}For each $t\in \left[t^{(n)}_k,t^{(n)}_{k+1}\right]$, by \eqref{dnS1i} and\eqref{Xij}, we can write
	$S_{1i}(t)=\bar{S}_{1i}(t)+\hat{S}_{1i}(t),$
	where
	\begin{align*}
	\bar{S}_{1i}(t)&=\int_{t^{(n)}_k}^t\int_s^t\sum\limits_{j\neq i}\sum\limits_{k\neq i}\dfrac{\gamma_{ik}}{X_{ij}^2(u)X_{ik}(u)}duds-\int_{t^{(n)}_k}^t\int_s^t\sum\limits_{j\neq i}\sum\limits_{k\neq i}\dfrac{\gamma_{ik}}{X_{ij}^2(u)X_{jk}(u)}duds\\
	&+\int_{t^{(n)}_k}^t\int_s^t \sum\limits_{j\neq i} \dfrac{b_i(X_i(u))}{X_{ij}^2(u)}duds-\int_{t^{(n)}_k}^t\int_s^t \sum\limits_{j\neq i} \dfrac{b_j(X_j(u))}{X_{ij}^2(u)}duds\\
	&-\int_{t^{(n)}_{k}}^t\int_s^t\sum\limits_{j\neq i} \dfrac{1}{X_{ij}^3(u)}\left[\sigma_{i}^2(X_i(u))+\sigma_{j}^2(X_j(u))\right]duds,
	\end{align*}
	and
	\begin{align*}
	\hat{S}_{1i}(t)&=\int_{t^{(n)}_k}^t\int_s^t\sum\limits_{j\neq i}\dfrac{1}{X_{ij}^2(u)}\sigma_i(X_i(u))dW_i(u)ds-\int_{t^{(n)}_k}^t\int_s^t\sum\limits_{j\neq i}\dfrac{1}{X_{ij}^2(u)}\sigma_j(X_j(u))dW_j(u)ds\\
	&=\int_{t^{(n)}_k}^t\int_{t^{(n)}_k}^u\sum\limits_{j\neq i}\dfrac{1}{X_{ij}^2(u)}\sigma_i(X_i(u))dsdW_i(u)-\int_{t^{(n)}_k}^t\int_{t^{(n)}_k}^u\sum\limits_{j\neq i}\dfrac{1}{X_{ij}^2(u)}\sigma_j(X_j(u))dsdW_j(u),
	\end{align*}
	where the last equation follows from Fubini's theorem.
	Using Holder's inequality and the estimate $x^4y^2\leq \dfrac{2}{3}x^6+\dfrac{1}{3}y^6$, we have 
\begin{align*}
	\bar{S}_{1i}^2(t)&\leq \dfrac{C}{n^2}\left\{\int_{t^{(n)}_k}^t\int_s^t\sum\limits_{j\neq i}\sum\limits_{k\neq i}\dfrac{\gamma_{ik}^2}{X_{ij}^4(u)X_{ik}^2(u)}duds+\int_{t^{(n)}_k}^t\int_s^t\sum\limits_{j\neq i}\sum\limits_{k\neq i}\dfrac{\gamma_{ik}^2}{X_{ij}^4(u)X_{jk}^2(u)}duds\right.\\
	&+\int_{t^{(n)}_k}^t\int_s^t \sum\limits_{j\neq i} \dfrac{b_i^2(X_i(u))}{X_{ij}^4(u)}duds+\int_{t^{(n)}_k}^t\int_s^t \sum\limits_{j\neq i} \dfrac{b_j^2(X_j(u))}{X_{ij}^4(u)}duds\\
	&\left.+\int_{t^{(n)}_{k}}^t\int_s^t\sum\limits_{j\neq i} \dfrac{\sigma_{i}^4(X_i(u))+\sigma_{j}^4(X_j(u))}{X_{ij}^6(u)}duds.\right\}\\
	&\leq \dfrac{C}{n^2}\left\{\int_{t^{(n)}_{k}}^t\int_s^t\sum\limits_{i=1}^{d-1}\left[\dfrac{1}{X_{i,i+1}^6(u)}+\dfrac{1}{X_{i,i+1}^4(u)}\right]duds\right\}.
\end{align*}
Set
$$\zeta_1(t)=\dfrac{n}{4}\sum\limits_{i=1}^{d}\bar{S}_{1i}^2(t)\mbox{ and }\xi_1(t)=\sum\limits_{i=1}^{d}e_i(t^{(n)}_k)\hat{S}_{1i}(t).$$
By Young's inequality, 
		\begin{align*}
		R_1(t)&=\sum\limits_{i=1}^{d}e_i(t_k^{(n)})S_{1i}(t)\leq \sum\limits_{i=1}^{d}\left(\dfrac{e_i^2(t_k^{(n)})}{n}+\dfrac{n}{4}\bar{S}_{1i}^2(t)\right)+\sum\limits_{i=1}^{d}e_i(t^{(n)}_k)\hat{S}_{1i}(t)\\
		&= \dfrac{1}{n}\|e(t_k^{(n)})\|^2+\zeta_{1}(t)+\xi_1(t).
		\end{align*}
		Since Assumption $H_{\hat{p}}$ holds for $\hat{p}\geq 6$,  $\bE\left[|\zeta_1(t)|\right]\leq \dfrac{C}{n^3}$. Moreover, it easy to see that $\bE\left[\xi_1(t)|\mathcal{F}_{t^{(n)}_k}\right]=0$. Thus we can conclude \eqref{ulR1}. }
		
		Next, using the AM-GM inequality $a+b+c \geq 3\sqrt[3]{abc}$ for non-negative numbers $a,b,c$, and the H\"older inquality for integral, we get 
		\begin{align*}
		{\| S_{1}(t)\|^2}&\leq \dfrac{C}{n^2}\sum\limits_{i=1}^{d}\sum\limits_{j\neq i}\left\{\int_{t^{(n)}_{k}}^t n^3\left|X_i(s)-X_i(t)\right|^6ds+\int_{t^{(n)}_{k}}^t n^3\left|X_j(s)-X_j(t)\right|^6ds\right. \notag \\
		&\left. +2\int_{t^{(n)}_{k}}^t\left|X_{ij}(s)\right|^{-6}ds+2\int_{t^{(n)}_{k}}^t\left|X_{ij}(t)\right|^{-6}ds \right\}.
		\end{align*}
		This estimate together with Assumption $H_{\hat{p}}$ for  $\hat{p}\geq 6$ implies \eqref{ulS1i}. 
		
		(ii) \ Finally we show \eqref{ulsupS1}. Note that $\sup_{t\in[0,T]} {\| S_{1}(t)\|^2}$ is bounded above by  
		\begin{align*}
		  \dfrac{C}{n^2}\sum\limits_{k=0}^{n-1}\sum\limits_{i=1}^{d}\sum\limits_{j\neq i}\left\{\sup_{t^{(n)}_k\leq t\leq t'\leq t^{(n)}_{k+1}}n^2\|X(t)-X(t')\|^6+\dfrac{1}{n}\max_{i=1,\ldots,d}\sup_{t^{(n)}_k\leq t\leq t'\leq t^{(n)}_{k+1}}|X_{i,i+1}(t)|^{-6} \right\}.
		\end{align*}
		 If    Assumption $H_{\hat{p}}$ holds with $\hat{p} \geq 18$, then using Lemma \ref{bd2} we obtain \eqref{ulsupS1}. 
	\end{proof}
	\begin{lemma}\label{Lem4}
		Let $S_{2i}$ and $R_2$ be defined by \eqref{dnS2i} and \eqref{dnR}, respectively. Let Assumption $H_{\hat{p}}$ hold for some  $\hat{p}\geq 2$, for any $t_k^{(n)}\leq t\leq t_{k+1}^{(n)}$, then 
				{\color{black}\begin{equation}
				R_2(t)\leq \dfrac{1}{n}\|e(t_k^{(n)})\|^2+\zeta_{2}(t)+\xi_2(t),  \label{ulR2}
				\end{equation}
				where $\zeta_2$ and $\xi_2$ are adapted processes satisfying $\bE\left[\|\zeta_{2}(t)\|\right]\leq \dfrac{C}{n^3}$ and $\bE\left[\xi_2(t)|\mathcal{F}_{t^{(n)}_k}\right]=0$. Moreover, it holds that} 
				\begin{equation}
				\sup_{t\in[0,T]}\mathbb{E}[{\| S_{2}(t)\|^2}]\leq \dfrac{C}{n^3}, \label{ulS2}
				\end{equation}	
				and 
				\begin{equation}
				\mathbb{E}\left[\sup_{t\in[0,T]} {\| S_{2}(t)\|^2}\right]\leq \dfrac{C}{n^2}. \label{ulsupS2}
				\end{equation}
	\end{lemma}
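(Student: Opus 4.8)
The plan is to expand $S_{2i}$ by It\^o's formula and the (stochastic) Fubini theorem into a ``drift'' part carrying an extra factor $t-u$ and a conditionally centred martingale part, and then to read off all three estimates from this single decomposition. Since $b_i\in C^2_b(\mathbb{R})$ and $X_i$ solves \eqref{Xt}, It\^o's formula gives, for $s\in[t^{(n)}_k,t]$,
\[
b_i(X_i(s))-b_i\bigl(X_i(t^{(n)}_k)\bigr)=\int_{t^{(n)}_k}^{s} g_i(u)\,du+\int_{t^{(n)}_k}^{s} b_i'(X_i(u))\sigma_i(X_i(u))\,dW_i(u),
\]
where $g_i(u)=b_i'(X_i(u))\bigl(\sum_{j\neq i}\gamma_{ij}/X_{ij}(u)+b_i(X_i(u))\bigr)+\frac12 b_i''(X_i(u))\sigma_i^2(X_i(u))$. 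Integrating in $s$ over $[t^{(n)}_k,t]$ and interchanging the order of integration (the stochastic Fubini theorem applies since $b_i',\sigma_i$ are bounded, and the ordinary Fubini theorem since $\mathbb{E}\int_0^T|g_i(u)|\,du<\infty$ by Assumption $H_{\hat p}$, $\hat p\geq 2$), we obtain $S_{2i}(t)=\bar S_{2i}(t)+\hat S_{2i}(t)$ with
\[
\bar S_{2i}(t)=\int_{t^{(n)}_k}^{t}(t-u)\,g_i(u)\,du,\qquad \hat S_{2i}(t)=\int_{t^{(n)}_k}^{t}(t-u)\,b_i'(X_i(u))\sigma_i(X_i(u))\,dW_i(u);
\]
in particular $\mathbb{E}[\hat S_{2i}(t)\mid\mathcal{F}_{t^{(n)}_k}]=0$.

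Next I would estimate the two pieces. For $\bar S_{2i}$, since $0\le t-u\le 1/n$ on $[t^{(n)}_k,t]$, Cauchy--Schwarz gives $\bar S_{2i}^2(t)\le n^{-3}\int_{t^{(n)}_k}^{t}g_i^2(u)\,du$; using the boundedness of $b_i,b_i',b_i'',\sigma_i$ and the elementary inequality $|X_{ij}(u)|^{-1}\le\sum_{l=1}^{d-1}|X_{l,l+1}(u)|^{-1}$ together with Assumption $H_{\hat p}$ ($\hat p\geq 2$), one gets $\sup_{u}\mathbb{E}[g_i^2(u)]\le C$, hence $\mathbb{E}[\bar S_{2i}^2(t)]\le C/n^4$. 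For $\hat S_{2i}$, It\^o's isometry and boundedness of $b_i'$ and $\sigma_i$ give $\mathbb{E}[\hat S_{2i}^2(t)]\le C\int_{t^{(n)}_k}^{t}(t-u)^2\,du\le C/n^3$. Summing over $i$ and using $(a+b)^2\le 2a^2+2b^2$ yields $\sup_{t\in[0,T]}\mathbb{E}[\|S_2(t)\|^2]\le C/n^3$, which is \eqref{ulS2}.

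For $R_2$ I would apply Young's inequality to the drift part only, $e_i(t^{(n)}_k)\bar S_{2i}(t)\le n^{-1}e_i^2(t^{(n)}_k)+\frac n4\bar S_{2i}^2(t)$, and set $\zeta_2(t)=\frac n4\sum_{i=1}^{d}\bar S_{2i}^2(t)$ and $\xi_2(t)=\sum_{i=1}^{d}e_i(t^{(n)}_k)\hat S_{2i}(t)$. Then $R_2(t)\le n^{-1}\|e(t^{(n)}_k)\|^2+\zeta_2(t)+\xi_2(t)$; the bound $\mathbb{E}[\bar S_{2i}^2(t)]\le C/n^4$ gives $\mathbb{E}[|\zeta_2(t)|]\le C/n^3$, and since each $e_i(t^{(n)}_k)$ is $\mathcal{F}_{t^{(n)}_k}$-measurable while $\hat S_{2i}(t)$ is conditionally centred, $\mathbb{E}[\xi_2(t)\mid\mathcal{F}_{t^{(n)}_k}]=0$; this is \eqref{ulR2}. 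Finally, \eqref{ulsupS2} needs no It\^o expansion: the integrand defining $S_{2i}$ is bounded by $2\|b_i\|_\infty$, so $|S_{2i}(t)|\le 2\|b_i\|_\infty T/n$ for every $k$ and every $t\in[t^{(n)}_k,t^{(n)}_{k+1}]$, whence $\sup_{t\in[0,T]}\|S_2(t)\|^2\le C/n^2$ pathwise, and \eqref{ulsupS2} follows by taking expectations.

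The only delicate point is the first step: checking the integrability that makes the stochastic Fubini interchange valid and $\hat S_{2i}$ a genuine martingale, which is exactly where the singular drift term $\gamma_{ij}/X_{ij}$ forces one to invoke Assumption $H_{\hat p}$ (with $\hat p\geq 2$ sufficing). Everything after that is an elementary consequence of the boundedness of $b_i,b_i',b_i'',\sigma_i$ and the moment bounds in Assumption $H_{\hat p}$. One could instead deduce \eqref{ulS2} directly from the Lipschitz continuity of $b_i$ and the H\"older estimate in $H_{\hat p}$, but since the decomposition $S_{2i}=\bar S_{2i}+\hat S_{2i}$ is needed for \eqref{ulR2} anyway, it is cleanest to route all three estimates through it.
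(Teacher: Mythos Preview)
Your proof is correct and follows the paper's approach: the same It\^o expansion of $b_i(X_i(\cdot))$ and the same split $S_{2i}=\bar S_{2i}+\hat S_{2i}$ into a drift part (handled by Young's inequality to give $\zeta_2$) and a conditionally centred martingale part (giving $\xi_2$). The only cosmetic differences are that you carry out the Fubini interchange explicitly to obtain the $(t-u)$ weights, whereas the paper keeps the double integrals and refers back to the argument of Lemma~\ref{Lem3}, and that your pathwise bound $|S_{2i}(t)|\le 2\|b_i\|_\infty T/n$ for \eqref{ulsupS2} is a slight shortcut compared with the paper's implicit sum-over-$k$ route.
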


\begin{proof}
	For each $t\in \left[t_k^{(n)},t^{(n)}_{k+1}\right]$, applying It\^{o}'s formula, we have
	\begin{align*}
	b_i(X_i(t))&-b_i(X_i(t^{(n)}_k))=\int_{t^{(n)}_k}^t b_i'(X_i(u))\sigma_i(X_i(u))dW_i(u)\\
	&+ \int_{t^{(n)}_k}^t \left(b_i'(X_i(u))\sum\limits_{j\neq i}\dfrac{\gamma_{ij}}{X_{ij}(u)}+b_i'(X_i(u))b_i(X_i(u))+\dfrac{b_i''(X_i(u))\sigma_i^2(X_i(u))}{2}\right)du.
	\end{align*}
	Combine with \eqref{dnS2i}, we can write $S_{2i}(t)=\bar{S}_{2i}(t)+\hat{S}_{2i}(t),$ where
	$$\bar{S}_{2i}(t)=\int_{t^{(n)}_k}^t\int_{t^{(n)}_k}^s\left(b_i'(X_i(u))\sum\limits_{j\neq i}\dfrac{\gamma_{ij}}{X_{ij}(u)}+b_i'(X_i(u))b_i(X_i(u))+\dfrac{b_i''(X_i(u))\sigma_i^2(X_i(u))}{2}\right)duds,$$
	and
	$$\hat{S}_{2i}(t)=\int_{t^{(n)}_k}^t\int_{t^{(n)}_k}^sb_i'(X_i(u))\sigma_i(X_i(u))dW_i(u)ds.$$
	Set
	$\zeta_2(t)=\dfrac{n}{4}\sum\limits_{i=1}^{d}\bar{S}_{2i}^2(t),\mbox{and }\xi_2(t)=\sum\limits_{i=1}^{d}e_i(t^{(n)}_k)\hat{S}_{2i}(t).$
	By a similar argument as in proof of Lemma \ref{Lem3}, we obtain \eqref{ulR2}, \eqref{ulS2}, and \eqref{ulsupS2}.
\end{proof}
	\begin{lemma}\label{Lem5}
		Let $S_{3i}$ and $R_3$ be defined by \eqref{dnS3i} and \eqref{dnR}, respectively. 
				 Let Assumption $H_{\hat{p}}$ hold for some  $\hat{p}\geq 2$, then 
				\begin{equation}
				\sup_{t\in[0,T]}\mathbb{E}\left[{\| S_{3}(t)\|^2} \right]\leq \dfrac{C}{n^3}, \label{ulS3}
				\end{equation} 
			and
				\begin{equation}
				\mathbb{E}\left[\sup_{t\in[0,T]}{\| S_{3}(t)\|^2}\right]\leq \dfrac{C}{n^2}. \label{ulsupS3}
				\end{equation}
	\end{lemma}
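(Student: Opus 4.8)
The strategy is to identify $S_{3i}(t)$ as an iterated stochastic integral whose integrand is an error term of It\^o--Taylor type, and then apply the Burkholder--Davis--Gundy (BDG) inequality twice. Writing $\Delta_k = t - t_k^{(n)}$, the inner bracket in \eqref{dnS3i} is
$$
\sigma_i(X_i(s)) - \sigma_i(X_i(t_k^{(n)})) - \int_{t_k^{(n)}}^s \sigma_i(X_i(t_k^{(n)}))\sigma_i'(X_i(t_k^{(n)}))\,dW_i(u).
$$
By It\^o's formula applied to $\sigma_i(X_i(\cdot))$ — using $b_i,\sigma_i \in C^2_b$ — the first difference equals $\int_{t_k^{(n)}}^s \sigma_i'(X_i(u))\sigma_i(X_i(u))\,dW_i(u)$ plus a bounded-variation (drift) term $\int_{t_k^{(n)}}^s \mu_i(u)\,du$, where $\mu_i$ collects $\sigma_i'(X_i(u))\big(\sum_{j\neq i}\gamma_{ij}/X_{ij}(u) + b_i(X_i(u))\big) + \tfrac12 \sigma_i''(X_i(u))\sigma_i^2(X_i(u))$. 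Subtracting the Milstein correction term, $S_{3i}(t)$ becomes $\int_{t_k^{(n)}}^t \big[\int_{t_k^{(n)}}^s (\sigma_i'\sigma_i)(X_i(u)) - (\sigma_i'\sigma_i)(X_i(t_k^{(n)}))\,dW_i(u) + \int_{t_k^{(n)}}^s \mu_i(u)\,du\big]\,dW_i(s)$. Set $\hat{S}_{3i}$ to be the double-stochastic-integral part and $\bar{S}_{3i}$ the part with the inner $du$-integral (which, after Fubini, is itself a single stochastic integral in $s$ against a bounded-in-$L^2$ integrand times a factor of order $\Delta_k$).

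For \eqref{ulS3}: apply BDG to the outer $dW_i(s)$ integral to bound $\mathbb{E}[S_{3i}^2(t)]$ by $\mathbb{E}\int_{t_k^{(n)}}^t (\text{inner term})^2\,ds$. For the $\hat{S}_{3i}$ contribution, apply BDG again to the inner integral: $\mathbb{E}[(\int_{t_k^{(n)}}^s (\sigma_i'\sigma_i)(X_i(u)) - (\sigma_i'\sigma_i)(X_i(t_k^{(n)}))\,dW_i(u))^2] \leq \mathbb{E}\int_{t_k^{(n)}}^s ((\sigma_i'\sigma_i)(X_i(u)) - (\sigma_i'\sigma_i)(X_i(t_k^{(n)})))^2\,du$, and since $\sigma_i'\sigma_i$ is Lipschitz (as $\sigma_i \in C^2_b$) this is $\leq C\,\mathbb{E}\int_{t_k^{(n)}}^s |X_i(u)-X_i(t_k^{(n)})|^2\,du$. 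Here one needs a local modulus-of-continuity estimate $\mathbb{E}|X_i(u)-X_i(t_k^{(n)})|^2 \leq C|u-t_k^{(n)}|$; this follows either from Assumption $H_{\hat p}$ with $\hat p \geq 2$ directly, or from the argument of Lemma \ref{bd2}(i) after noting the $du$-increment involves $\mathbb{E}|X_{ij}(u)|^{-2}$ which is controlled. Integrating three nested intervals each of length $O(1/n)$ gives $O(n^{-3})$. The $\bar{S}_{3i}$ term has an explicit factor $\Delta_k^2 = O(n^{-2})$ out front from the two $du$-integrations, and the remaining stochastic-integral piece contributes another $O(n^{-1})$, again totalling $O(n^{-3})$; summing over $i$ and taking the supremum over $t\in[0,T]$ (the bound is uniform in $k$) gives \eqref{ulS3}.

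For \eqref{ulsupS3}: write $\sup_{t\in[0,T]}\|S_3(t)\|^2 \leq \sum_{k=0}^{n-1}\sup_{t\in[t_k^{(n)},t_{k+1}^{(n)}]}\|S_3(t)\|^2$ since on each subinterval $S_3$ restarts from $0$; apply Doob's maximal inequality together with BDG on each subinterval to get $\mathbb{E}[\sup_{t\in[t_k^{(n)},t_{k+1}^{(n)}]}\|S_3(t)\|^2] \leq C\,\mathbb{E}\int_{t_k^{(n)}}^{t_{k+1}^{(n)}}(\text{inner term})^2\,ds \leq C/n^3$ by the same estimate as above, and then sum the $n$ terms to obtain $O(n^{-2})$, which is \eqref{ulsupS3}.

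The main obstacle is the bookkeeping in the double application of BDG and Fubini to isolate the leading-order cancellation built into the Milstein correction: one must be careful that the drift remainder $\mu_i$ — which contains the singular term $\sum_{j\neq i}\gamma_{ij}/X_{ij}(u)$ — only ever appears multiplied by a factor of $\Delta_k$ from an extra time integration, so that its contribution is genuinely $O(n^{-3})$ in $L^2$ and requires only $\mathbb{E}|X_{ij}(u)|^{-2} < C$, i.e. Assumption $H_{\hat p}$ with $\hat p \geq 2$ rather than a higher moment. Once the decomposition $S_{3i} = \bar S_{3i} + \hat S_{3i}$ is set up cleanly, the remaining estimates are routine and parallel to those in Lemma \ref{Lem3} and Lemma \ref{Lem4}.
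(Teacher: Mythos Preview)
Your proposal is correct and follows essentially the same approach as the paper: apply It\^o's formula to $\sigma_i(X_i(\cdot))$ to split the inner bracket of $S_{3i}$ into a drift remainder containing the singular term $\sum_{j\neq i}\gamma_{ij}\sigma_i'(X_i(u))/X_{ij}(u)$ and a stochastic remainder $(\sigma_i'\sigma_i)(X_i(u))-(\sigma_i'\sigma_i)(X_i(t_k^{(n)}))$, then estimate each piece with Doob/BDG, It\^o isometry, the Lipschitz property of $\sigma_i'\sigma_i$, and the increment bound $\mathbb{E}|X_i(u)-X_i(t_k^{(n)})|^2\leq C|u-t_k^{(n)}|$ from Assumption $H_{\hat p}$. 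The paper in fact proves the stronger intermediate statement $\sup_k \mathbb{E}\big[\sup_{t\in[t_k^{(n)},t_{k+1}^{(n)}]}\|S_3(t)\|^2\big]\leq C/n^3$ directly (which yields both \eqref{ulS3} and, after summing over $k$, \eqref{ulsupS3}), whereas you treat the two estimates separately; and the paper handles the drift piece via H\"older on the $du$-integral rather than the Fubini you mention, but these are cosmetic differences.
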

	\begin{proof}
		For each $i=1,\ldots, d$, applying It\^o's formula for $\sigma_i$, we get  
		\begin{align*}
		\sigma_i(X_i(s))&-\sigma_i(X_i(t_k^{(n)}))-\int_{t^{(n)}_{k}}^s{\color{black}\sigma_i'(X_i(t_k^{(n)}))\sigma_i(X_i(t_k^{(n)}))}dW_i(u)\\
		=&\int_{t^{(n)}_k}^s\left[\sum\limits_{j\neq i} \dfrac{\gamma_{ij}\sigma_i'(X_i(u))}{X_{ij}(u)}+\sigma_i'(X_i(u)) b_i(X_i(u))+\dfrac{1}{2}\sigma_i''(X_i(u))\sigma_i^2(X_i(u))\right]du\\
		&+\int_{t^{(n)}_{k}}^s\left[{\color{black}\sigma_i'(X_i(u))\sigma_i(X_i(u))-\sigma_i'(X_i(t_k^{(n)}))\sigma_i(X_i(t_k^{(n)}))}\right]dW_i(u).
		\end{align*}
		Using Doob's maximal inequality and H\"older's inequality for integral, we get 
		\begin{align}
		\mathbb{E}&\left\{\sup_{t^{(n)}_k\leq t\leq t^{(n)}_{k+1}}\left[\int_{t^{(n)}_k}^t\int_{t^{(n)}_k}^s\left(\sum\limits_{j\neq i} \dfrac{\gamma_{ij}\sigma_i'(X_i(u))}{X_{ij}(u)}+{\color{black}\sigma_i'(X_i(u))b_i(X_i(u))+\dfrac{1}{2}\sigma_i''(X_i(u))\sigma_i^2(X_i(u))}\right)dudW_i(s)\right]^2\right\} \notag \\
		&\leq 4\mathbb{E}\left[\int_{t^{(n)}_k}^{t^{(n)}_{k+1}}\left(\int_{t^{(n)}_k}^s\left|\sum\limits_{j\neq i} \dfrac{\gamma_{ij}\sigma_i'(X_i(u))}{X_{ij}(u)}+{\color{black}\sigma_i'(X_i(u))b_i(X_i(u))+\dfrac{1}{2}\sigma_i''(X_i(u))\sigma_i^2(X_i(u))}\right|du\right)^2ds\right] \notag \\
		&\leq \dfrac{C}{n} \mathbb{E}\left[\int_{t^{(n)}_k}^{t^{(n)}_{k+1}}\int_{t^{(n)}_k}^{t^{(n)}_{k+1}}\left|\sum\limits_{j\neq i} \dfrac{\gamma_{ij}\sigma_i'(X_i(u))}{X_{ij}(u)}+{\color{black}\sigma_i'(X_i(u))b_i(X_i(u))+\dfrac{1}{2}\sigma_i''(X_i(u))\sigma_i^2(X_i(u))}\right|^2duds\right] \notag \\
		&\leq \dfrac{C}{n^3}, \label{ulsupW}
		\end{align}
		where the last estimate follows from Assumption $H_{\hat{p}}$ and the fact that $b, \sigma \in C^2_b$. Similary, by using Doob's maximal inequality, the It\^o isometry and the Lipschitz continuity of $\sigma_i'\sigma_i$,  we get 		
		\begin{align*}
		\mathbb{E}&\left[\sup_{t^{(n)}_k\leq t\leq t^{(n)}_{k+1}}\left(\int_{t^{(n)}_{k}}^t\int_{t^{(n)}_{k}}^s\left[{\color{black}\sigma_i'(X_i(u))\sigma_i(X_i(u))-\sigma_i'(X_i(t_k^{(n)}))\sigma_i(X_i(t_k^{(n)}))}\right]dW_i(u)dW_i(s)\right)^2\right]\\
		&\leq C\mathbb{E}\left[\int_{t^{(n)}_{k}}^{t^{(n)}_{k+1}}\left(\int_{t^{(n)}_{k}}^s\left[{\color{black}\sigma_i'(X_i(u))\sigma_i(X_i(u))-\sigma_i'(X_i(t_k^{(n)}))\sigma_i(X_i(t_k^{(n)}))}\right]dW_i(u)\right)^2ds\right]\\
		&=C\int_{t^{(n)}_{k}}^{t^{(n)}_{k+1}}\int_{t^{(n)}_{k}}^s\mathbb{E}\left[ \left| {\color{black}\sigma_i'(X_i(u))\sigma_i(X_i(u))-\sigma_i'(X_i(t_k^{(n)}))\sigma_i(X_i(t_k^{(n)}))}\right|^2 \right] duds\\
		&\leq C\int_{t^{(n)}_{k}}^{t^{(n)}_{k+1}}\int_{t^{(n)}_{k}}^s\mathbb{E}\left[|X_i(u)-X_i(t_k^{(n)})|^2\right]duds\\
		&\leq \dfrac{C}{n^3},
		\end{align*}
where the last estimate follows from Assumption 2.1. This estimate together with \eqref{ulsupW} implies that  $$\sup_{0\leq k < n} \mathbb{E}\left[\sup_{t_k^{(n)}\leq t\leq  t_{k+1}^{(n)}} {\| S_{3}(t)\|^2}\right]\leq \dfrac{C}{n^3},$$
		which concludes \eqref{ulS3}. 
 		Moreover,  
 		\begin{equation*}
		\mathbb{E}\left[\sup_{t\in [0,T]} {\| S_{3}(t)\|^2}\right]\leq \sum\limits_{k=0}^{n-1}\mathbb{E}\left[\sup_{t_k^{(n)}\leq t\leq  t_{k+1}^{(n)}}{\| S_{3}(t)\|^2}\right],
		\end{equation*}
		which implies \eqref{ulsupS3}. 
	\end{proof}
	\begin{lemma}\label{Lem6}
		Let $S_{4i}$ and $R_4$ be defined by \eqref{dnS4i} and \eqref{dnR}, respectively.  
		\begin{enumerate}
				\item[(i)] It holds  
				\begin{equation*}
				{\| S_{4}(t)\|^2}\leq \dfrac{C}{n^2}\|e(t_k^{(n)})\|^2\text{ and } R_4(t)\leq \dfrac{C}{n}\|e(t_k^{(n)})\|^2 \text{ for any } t_k^{(n)}\leq t\leq t_{k+1}^{(n)}, \label{ulR4}
				\end{equation*}	
				\item[(ii)] Moreover, 
				$$\mathbb{E}\left[\sup_{t\in[0,T]} {\| S_{4}(t)\|^2}\right]\leq \dfrac{C}{n^2}\sum\limits_{k=0}^{n}\mathbb{E}{\color{black}\left[\|e(t_k^{(n)})\|^2\right]}.$$
		\end{enumerate}
	\end{lemma}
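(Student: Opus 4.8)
This is the most elementary of the auxiliary lemmas: the term $S_{4i}$ involves only the difference of $b_i$ evaluated at the two values $X_i(t_k^{(n)})$ and $X_i^{(n)}(t_k^{(n)})$, multiplied by the (deterministic, small) time increment $t-t_k^{(n)}$, so no stochastic analysis is needed — only the Lipschitz property of $b_i$ (which follows from $b_i\in C^2_b$) and the bound $0\le t-t_k^{(n)}\le T/n$.

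First I would record that, by the mean value theorem and boundedness of $b_i'$, there is a constant $L$ (the common Lipschitz constant of the $b_i$) with
\begin{equation*}
\bigl|b_i(X_i(t_k^{(n)}))-b_i(X_i^{(n)}(t_k^{(n)}))\bigr|\le L\,|e_i(t_k^{(n)})|,\qquad i=1,\dots,d.
\end{equation*}
Combining this with $|t-t_k^{(n)}|\le T/n$ gives, for every $t\in[t_k^{(n)},t_{k+1}^{(n)}]$,
\begin{equation*}
S_{4i}^2(t)\le \frac{L^2T^2}{n^2}\,e_i^2(t_k^{(n)}),
\end{equation*}
and summing over $i=1,\dots,d$ yields the first inequality of (i), namely $\|S_4(t)\|^2\le \tfrac{C}{n^2}\|e(t_k^{(n)})\|^2$. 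For the second inequality of (i), apply the Cauchy–Schwarz inequality in $\mathbb{R}^d$:
\begin{equation*}
R_4(t)=\sum_{i=1}^d e_i(t_k^{(n)})S_{4i}(t)\le \|e(t_k^{(n)})\|\,\|S_4(t)\|\le \|e(t_k^{(n)})\|\cdot\frac{C}{n}\|e(t_k^{(n)})\|=\frac{C}{n}\|e(t_k^{(n)})\|^2,
\end{equation*}
using the bound just proved for $\|S_4(t)\|$.

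For part (ii), note that the supremum over $[0,T]$ is attained on one of the subintervals $[t_k^{(n)},t_{k+1}^{(n)}]$, so by part (i),
\begin{equation*}
\sup_{t\in[0,T]}\|S_4(t)\|^2\le \max_{0\le k\le n-1}\ \sup_{t_k^{(n)}\le t\le t_{k+1}^{(n)}}\|S_4(t)\|^2\le \frac{C}{n^2}\max_{0\le k\le n-1}\|e(t_k^{(n)})\|^2\le \frac{C}{n^2}\sum_{k=0}^{n}\|e(t_k^{(n)})\|^2,
\end{equation*}
and taking expectations gives the claimed estimate. There is no real obstacle here; the only mild subtlety is the passage from the pathwise subinterval bound to a global bound, handled by dominating the maximum over $k$ by the sum over $k$ — this deliberately keeps the $\sum_k\mathbb{E}[\|e(t_k^{(n)})\|^2]$ form so that it can later be absorbed via a discrete Gronwall-type argument together with Lemma \ref{Lem1}.
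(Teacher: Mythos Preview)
Your proof is correct and follows exactly the approach the paper indicates: the paper's own proof simply states that the estimates follow from the Lipschitz property of $b_i$ and omits the details. Your write-up supplies precisely those omitted details (Lipschitz bound combined with $|t-t_k^{(n)}|\le T/n$, Cauchy--Schwarz for $R_4$, and bounding the maximum by the sum for part (ii)).
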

	\begin{proof}
		These estimates follows from the  Lipschitz property of $b_i(x)$, so, we skip the detailed proof. 
	\end{proof}
	
		\begin{lemma}\label{Lem7}
			Let $S_{5i}$ and $R_5$ be defined by \eqref{dnS5i} and \eqref{dnR}, respectively.    
			{\begin{enumerate}
					\item[(i)] For any $t_k^{(n)}\leq t\leq t_{k+1}^{(n)}$, it holds  
					\begin{align} \label{ulS5} 
					{\| S_{5}(t)\|^2}\leq \dfrac{C}{n}\|e(t_k^{(n)})\|^2+\xi_5(t),
					\end{align} 
	where $\xi_5(t)$ is an adapted process satisfying $\mathbb{E}{\color{black}\left[\xi_5(t)|\mathcal{F}_{t_k^{(n)}}\right]}=0$.
					\item[(ii)] Moreover, 
					\begin{align} \label{ulS5tau} \bE\left[\sup_{t\in [0,T]} \| S_{5}(t)\|^2 \right]\leq \dfrac{C}{n}\sum\limits_{k=0}^{n}\mathbb{E}\left[\|e(t_k^{(n)})\|^2\right].
					\end{align}
			\end{enumerate}}
	\end{lemma}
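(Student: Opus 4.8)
The plan is to exploit the product structure of $S_{5i}$. Recall that $S_{5i}(t)$ is the product of the $\mathcal{F}_{t_k^{(n)}}$-measurable quantity $\sigma_i(X_i(t_k^{(n)}))-\sigma_i(X_i^{(n)}(t_k^{(n)}))$ and the Brownian increment $W_i(t)-W_i(t_k^{(n)})$, which is independent of $\mathcal{F}_{t_k^{(n)}}$. Since $\sigma_i\in C_b^2(\mathbb{R})$ is in particular globally Lipschitz, $|\sigma_i(X_i(t_k^{(n)}))-\sigma_i(X_i^{(n)}(t_k^{(n)}))|\le C|e_i(t_k^{(n)})|$, so that
\[
S_{5i}^2(t)\le C\,e_i^2(t_k^{(n)})\bigl(W_i(t)-W_i(t_k^{(n)})\bigr)^2,\qquad t_k^{(n)}\le t\le t_{k+1}^{(n)}.
\]
Note that, unlike Lemmas \ref{Lem3}--\ref{Lem5}, no moment bound on $X$ is needed here; only $\sigma\in C_b^2(\mathbb{R})$ enters.

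For part (i), I would split each Brownian square into its drift and martingale parts, $\bigl(W_i(t)-W_i(t_k^{(n)})\bigr)^2=(t-t_k^{(n)})+\bigl[\bigl(W_i(t)-W_i(t_k^{(n)})\bigr)^2-(t-t_k^{(n)})\bigr]$, bound $t-t_k^{(n)}\le T/n$ in the first term, and set
\[
\xi_5(t):=C\sum_{i=1}^d e_i^2(t_k^{(n)})\Bigl[\bigl(W_i(t)-W_i(t_k^{(n)})\bigr)^2-(t-t_k^{(n)})\Bigr].
\]
Summing over $i$ then gives $\|S_5(t)\|^2\le \dfrac{CT}{n}\|e(t_k^{(n)})\|^2+\xi_5(t)$, i.e.\ \eqref{ulS5}. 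Because $e_i(t_k^{(n)})$ is $\mathcal{F}_{t_k^{(n)}}$-measurable and $\bigl(W_i(t)-W_i(t_k^{(n)})\bigr)^2-(t-t_k^{(n)})$ has zero conditional expectation given $\mathcal{F}_{t_k^{(n)}}$, we obtain $\mathbb{E}\left[\xi_5(t)\,|\,\mathcal{F}_{t_k^{(n)}}\right]=0$, as required.

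For part (ii), since $S_5(t)$ for $t\in[t_k^{(n)},t_{k+1}^{(n)}]$ depends only on the data at $t_k^{(n)}$, I would bound $\sup_{t\in[0,T]}\|S_5(t)\|^2\le \sum_{k=0}^{n-1}\sup_{t_k^{(n)}\le t\le t_{k+1}^{(n)}}\|S_5(t)\|^2$. On each subinterval the factor $e_i^2(t_k^{(n)})$ does not depend on $t$, so it pulls out of the supremum, and by the tower property followed by Doob's $L^2$-maximal inequality for the Brownian increment,
\[
\mathbb{E}\Bigl[\sup_{t_k^{(n)}\le t\le t_{k+1}^{(n)}}\bigl(W_i(t)-W_i(t_k^{(n)})\bigr)^2\,\Big|\,\mathcal{F}_{t_k^{(n)}}\Bigr]\le 4\bigl(t_{k+1}^{(n)}-t_k^{(n)}\bigr)=\dfrac{4T}{n}.
\]
Taking expectations, summing over $i$ and $k$, and using $e_i^2\le\|e\|^2$ yields \eqref{ulS5tau}. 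This is the most elementary of the six estimates; the only point requiring care is keeping the conditioning straight so that $\xi_5$ is genuinely a conditional martingale difference, and observing that the global supremum in (ii) must be controlled by the sum of the subinterval suprema rather than a single maximal inequality — the resulting factor $\sum_k$ is harmless since the right-hand side of \eqref{ulS5tau} will be absorbed by the discrete Gronwall argument (Lemma \ref{Lem1}) in the final assembly of the proof.
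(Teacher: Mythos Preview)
Your proof is correct and follows essentially the same approach as the paper: Lipschitz continuity of $\sigma_i$ gives $S_{5i}^2(t)\le C\,e_i^2(t_k^{(n)})(W_i(t)-W_i(t_k^{(n)}))^2$, the Brownian square is split into its compensator $(t-t_k^{(n)})$ and the centered part to define $\xi_5$, and for (ii) the global supremum is controlled by summing subinterval suprema and applying Doob's maximal inequality. The only cosmetic differences are that the paper writes $\|e(t_k^{(n)})\|^2$ in place of your $e_i^2(t_k^{(n)})$ inside $\xi_5$ (a weaker but equivalent choice) and invokes independence of $e(t_k^{(n)})$ and the Brownian increment directly rather than via the tower property.
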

	\begin{proof}
For any $t\in \left[t^{(n)}_k,t^{(n)}_{k+1}\right]$, we have
		\begin{align*}
		 \| S_{5}(t)\|^2&{\color{black}\leq C\sum\limits_{i=1}^d \|e(t_k^{(n)})\|^2(W_i(t)-W_i (t_k^{(n)}))^2}\\
		&\leq \dfrac{C}{n}\|e(t_k^{(n)})\|^2+ C\sum_{i=1}^d \|e(t_k^{(n)})\|^2\left[(W_i(t)-W_i(t_k^{(n)}))^2-(t-t_k^{(n)})\right],
		\end{align*}
		which implies \eqref{ulS5} with $\xi_5(t)=C\sum_{i=1}^d \|e(t_k^{(n)})\|^2\left[(W_i(t)-W_i(t_k^{(n)}))^2-(t-t_k^{(n)})\right]$.  
On the other hand,  
		\begin{align*}
\sup_{t\in [0,T]} \| S_{5}(t)\|^2&{\color{black}\leq C\sum\limits_{i=1}^d \sum_{k=0}^n  \|e(t_k^{(n)})\|^2 \sup_{t_k^{(n)} \leq t \leq t_{k+1}^{(n)}} (W_i(t) -W_i (t_k^{(n)}))^2}.
\end{align*}
Note that  $\sup_{t_k^{(n)} \leq t \leq t_{k+1}^{(n)}} (W_i(t) -W_i (t_k^{(n)}))^2$ and $e(t_k^{(n)})$ are independent, we have 
		\begin{align*}
\bE\left[\sup_{t\in [0,T]} \| S_{5}(t)\|^2 \right] &\leq C\sum\limits_{i=1}^d \sum_{k=0}^n \bE\left[ \|e(t_k^{(n)})\|^2\right] \bE \left[ \sup_{t_k^{(n)} \leq t \leq t_{k+1}^{(n)}} (W_i(t) -W_i (t_k^{(n)}))^2\right] \\
&\leq 4C\sum\limits_{i=1}^d \sum_{k=0}^n \bE\left[ \|e(t_k^{(n)})\|^2\right] \bE \left[  (W_i(t^{(n)}_{k+1}) -W_i (t_k^{(n)}))^2\right]\\
&= \frac{4Cd}{n} \sum_{k=0}^n \bE\left[ \|e(t_k^{(n)})\|^2\right],
\end{align*}
which implies \eqref{ulS5tau}. 
	\end{proof}

\begin{lemma}\label{Lem8}   Let $S_{6i}$  be defined by  \eqref{dnS6i}. 
	\begin{enumerate}
			\item[(i)] For any $ t \in [ t_k^{(n)}, t_{k+1}^{(n)}]$, it holds
			\begin{equation}
			\| S_{6}(t)\|^2\leq \dfrac{C}{n^2}\|e(t_k^{(n)})\|^2+\xi_6(t), \label{ulS6}
			\end{equation}
			where $\xi_6(t)$ is an adapted process satisfying $\mathbb{E}{\color{black}\left[\xi_6(t)|\mathcal{F}_{t_k^{(n)}}\right]}=0$.
			\item[(ii)] Moreover, 
			\begin{align} \label{ulS6tau} 
			\bE\left[\sup_{t \in [0,T]}\| S_{6}(t)\|^2\right]
			\leq \dfrac{C}{n^2}\sum\limits_{k=0}^{n}\mathbb{E}{\color{black}\left[\|e(t_k^{(n)})\|^2\right]}.
			\end{align}
	\end{enumerate}
\end{lemma}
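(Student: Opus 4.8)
The plan is to mimic the proof of Lemma~\ref{Lem7}, exploiting the product structure of $S_{6i}$: a factor depending only on the frozen values at $t_k^{(n)}$ times a centered quadratic functional of the Brownian increment over $[t_k^{(n)},t]$. Since $\sigma_i\in C^2_b(\mathbb{R})$, the product $g_i:=\sigma_i\sigma_i'$ has derivative $(\sigma_i')^2+\sigma_i\sigma_i''$, which is bounded, so $g_i$ is globally Lipschitz. Hence $|g_i(X_i(t_k^{(n)}))-g_i(X_i^{(n)}(t_k^{(n)}))|\leq C|e_i(t_k^{(n)})|\leq C\|e(t_k^{(n)})\|$, and squaring \eqref{dnS6i} and summing over $i$ gives, for $t\in[t_k^{(n)},t_{k+1}^{(n)}]$,
$$
\|S_6(t)\|^2\leq C\|e(t_k^{(n)})\|^2\sum_{i=1}^d\Big[(W_i(t)-W_i(t_k^{(n)}))^2-(t-t_k^{(n)})\Big]^2 .
$$

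For part~(i) I would split each summand according to its $\mathcal{F}_{t_k^{(n)}}$-conditional mean. Writing $h=t-t_k^{(n)}\leq T/n$ and using that $W_i(t)-W_i(t_k^{(n)})\sim N(0,h)$ is independent of $\mathcal{F}_{t_k^{(n)}}$, the elementary Gaussian identity $\mathbb{E}[(Z^2-h)^2]=2h^2$ for $Z\sim N(0,h)$ gives $\mathbb{E}[\,((W_i(t)-W_i(t_k^{(n)}))^2-h)^2\,|\,\mathcal{F}_{t_k^{(n)}}]=2h^2$. Setting
$$
\xi_6(t):=C\|e(t_k^{(n)})\|^2\sum_{i=1}^d\Big(\big[(W_i(t)-W_i(t_k^{(n)}))^2-h\big]^2-2h^2\Big),
$$
one sees that $\xi_6$ is adapted and, since $\|e(t_k^{(n)})\|^2$ is $\mathcal{F}_{t_k^{(n)}}$-measurable, satisfies $\mathbb{E}[\xi_6(t)\,|\,\mathcal{F}_{t_k^{(n)}}]=0$; the remaining deterministic term is $2dCh^2\|e(t_k^{(n)})\|^2\leq (C/n^2)\|e(t_k^{(n)})\|^2$, which is \eqref{ulS6}.

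For part~(ii) I would bound $\sup_{t\in[0,T]}\|S_6(t)\|^2\leq\sum_{k=0}^{n-1}\sup_{t_k^{(n)}\leq t\leq t_{k+1}^{(n)}}\|S_6(t)\|^2$, so that by the displayed inequality above,
$$
\sup_{t\in[0,T]}\|S_6(t)\|^2\leq C\sum_{k=0}^{n-1}\|e(t_k^{(n)})\|^2\sum_{i=1}^d\sup_{t_k^{(n)}\leq t\leq t_{k+1}^{(n)}}\Big[(W_i(t)-W_i(t_k^{(n)}))^2-(t-t_k^{(n)})\Big]^2 .
$$
Since $\|e(t_k^{(n)})\|^2$ is $\mathcal{F}_{t_k^{(n)}}$-measurable while the inner supremum is independent of $\mathcal{F}_{t_k^{(n)}}$, taking expectations factorizes each product, and it remains to estimate $\mathbb{E}[\sup_{0\leq s\leq T/n}(B_s^2-s)^2]$ for a one-dimensional Brownian motion $B$. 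As $(B_s^2-s)_{s\geq0}$ is a martingale starting at $0$, Doob's $L^2$-maximal inequality gives $\mathbb{E}[\sup_{0\leq s\leq T/n}(B_s^2-s)^2]\leq 4\,\mathbb{E}[(B_{T/n}^2-T/n)^2]=8(T/n)^2$; substituting this bound yields \eqref{ulS6tau}.

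I do not expect a genuine obstacle here: the estimate becomes routine once one records the Lipschitz continuity of $\sigma_i\sigma_i'$ and the stochastic independence of $e(t_k^{(n)})$ from the increment $W(\cdot)-W(t_k^{(n)})$. The only point deserving care is the bookkeeping in part~(ii), namely breaking $\sup_{[0,T]}$ into per-subinterval suprema with the correct left endpoint $t_k^{(n)}$ as freezing point so that independence can legitimately be used to factor the expectation, together with the two standard facts $\mathbb{E}[(Z^2-h)^2]=2h^2$ for $Z\sim N(0,h)$ and Doob's inequality applied to the martingale $s\mapsto B_s^2-s$.
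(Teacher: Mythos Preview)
Your proposal is correct and follows essentially the same route as the paper's proof: both use the Lipschitz continuity of $\sigma_i\sigma_i'$ to bound $\|S_6(t)\|^2$ by $C\|e(t_k^{(n)})\|^2\sum_i\bigl[(W_i(t)-W_i(t_k^{(n)}))^2-(t-t_k^{(n)})\bigr]^2$, center each summand around its conditional mean $2h^2$ to define $\xi_6$, and for part~(ii) break the supremum into subintervals, factor by independence, and apply Doob's maximal inequality to the martingale $s\mapsto B_s^2-s$ (equivalently, to the nonnegative submartingale $(B_s^2-s)^2$, as the paper phrases it). Your version is slightly more explicit in justifying the Lipschitz property via $(\sigma_i')^2+\sigma_i\sigma_i''$ being bounded, but otherwise the arguments coincide.
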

\begin{proof}
	For any {\color{black}$t\in\left[t_k^{(n)},t_{k+1}^{(n)}\right]$}, since $\sigma\sigma'$ is Lipschitz continuous, we have
	\begin{align*}
	\| S_{6}(t)\|^2&\leq C\sum\limits_{i=1}^d \|e(t_k^{(n)})\|^2\left[(W_i(t)-W_i (t_k^{(n)}))^2-(t-t_k^{(n)})\right]^2\\
	&\leq \dfrac{C}{n^2}\|e(t_k^{(n)})\|^2\\
	& \quad + C\sum_{i=1}^d \|e(t_k^{(n)})\|^2\left\{\left[(W_i(t)-W_i(t_k^{(n)}))^2-(t-t_k^{(n)})\right]^2-2(t-t_k)^2\right\}.
	\end{align*}
	Set $\xi_6(t)=\sum_{i=1}^d \|e(t_k^{(n)})\|^2\left\{\left[(W_i(t)-W_i(t_k^{(n)}))^2-(t-t_k^{(n)})\right]^2-2(t-t_k)^2\right\}$, we have \eqref{ulS6} and $\mathbb{E}{\color{black}\left[\xi_6(t)|\mathcal{F}_{t_k^{(n)}}\right]}=0$. 	On the other hand,  we have  
	\begin{align*}
		\sup_{t \in [0,T]}\| S_{6}(t)\|^2&\leq C \sum\limits_{k=0}^n  \sum\limits_{i=1}^d \|e(t_k^{(n)})\|^2 \sup_{t^{(n)}_k \leq t \leq t^{(n)}_{k+1}}\left[(W_i(t)-W_i (t_k^{(n)}))^2-(t-t_k^{(n)})\right]^2.
	\end{align*} 
	Note that  $\sup_{t^{(n)}_k \leq t \leq t^{(n)}_{k+1}}\left[(W_i(t)-W_i (t_k^{(n)}))^2-(t-t_k^{(n)})\right]^2$ and $\|e(t_k^{(n)})\|^2$ are independent, we have 
	\begin{align*}
	&\bE\left[\sup_{t \in [0,T]}\| S_{6}(t)\|^2\right]\\ 
	&\leq C \sum\limits_{k=0}^n  \sum\limits_{i=1}^d \bE\left[ \|e(t_k^{(n)})\|^2 \right] \bE \left[  \sup_{t^{(n)}_k \leq t \leq t^{(n)}_{k+1}}\left[(W_i(t)-W_i (t_k^{(n)}))^2-(t-t_k^{(n)})\right]^2\right]\\
	&\leq 4C \sum\limits_{k=0}^n  \sum\limits_{i=1}^d \bE\left[ \|e(t_k^{(n)})\|^2 \right] \bE \left[  \left[(W_i(t^{(n)}_{k+1})-W_i (t_k^{(n)}))^2-(t^{(n)}_{k+1}-t_k^{(n)})\right]^2\right]\\
	&= \dfrac{8C}{n^2}\sum_{k=0}^n \|e(t_k^{(n)})\|^2,	\end{align*} 
	where the last inquality follows from Dood's maximal inequality for non-negative sub-martingale. 
Therefore, we conclude \eqref{ulS6tau}.
\end{proof}

\subsection{Proof of Theorem \ref{Thm1}}
	(i) \  Let Assumption $H_{\hat{p}}$ hold for $\hat{p} \geq 6$. For each $t \in \left[t^{(n)}_k, t^{(n)}_{k+1}\right]$,  it follows from  \eqref{et} and Lemmas \ref{Lem3} --  \ref{Lem7} that 
	{\color{black}\begin{align*}
	\|e(t)\|^2& \leq \left(1+\dfrac{C}{n}\right)\|e(t_k^{(n)})\|^2+\|S_1(t)\|^2+\|S_2(t)\|^2+\|S_3(t)\|^2+\zeta_1(t)+\zeta_2(t)\\
		&+\xi_1(t)+\xi_2(t)+R_3(t)+R_5(t)+\xi_5(t)+\xi_6(t)+R_6(t)\notag\\
	&\leq \left(1+\dfrac{C}{n}\right)\|e(t_k^{(n)})\|^2+\zeta(t)+\xi(t),
	\end{align*}
	where  $\zeta(t)=\|S_1(t)\|^2+\|S_2(t)\|^2+\|S_3(t)\|^2+\zeta_1(t)+\zeta_2(t)$, and $\xi(t)=\xi_1(t)+\xi_2(t)+\xi_5(t)+\xi_6(t)+R_3(t)+R_5(t)+R_6(t)$. It also  follows from Lemmas \ref{Lem3} --  \ref{Lem5} that 
	$$\sup_{t\in[0,T]}\mathbb{E}[|\zeta(t)|]\leq \dfrac{C}{n^3}.$$}
	By choosing $t=t_{k+1}^{(n)}$, we have 
	$$\|e(t_{k+1}^{(n)})\|^2\leq \left(1+\dfrac{C}{n}\right)\|e(t_k^{(n)})\|^2+\zeta(t_{k+1}^{(n)})+\xi(t_{k+1}^{(n)}).$$
	Moreover, since  $\mathbb{E}\left[\xi(t^{(n)}_{k+1})| \mathcal{F}_{t_k^{(n)}}\right]=0$, by applying Lemma \ref{Lem1} with $q = 1+ \frac{C}{n}$, we obtain \eqref{kq1}. 
	
	The estimate \eqref{kq2} is a consequence of \eqref{kq1} and Lemma 3.2 in Gy\"ongy and Krylov (2003).  
	
\vskip 0.2cm 
\noindent 
	(ii) \ Suppose that Assumption $H_{\hat{p}}$ holds for $\hat{p} \geq 18$, we show \eqref{kq3}. From \eqref{et}, we have 
	$$\sup_{t \in [0,T]} \|e(t)\|^2\leq C{\sup_{0\leq k \leq n}\|e(t_k^{(n)})\|^2}+C\sum\limits_{m=1}^{6}{\sup_{t\in[0,T]}\|S_m(t)\|^2}.$$
	If  {\color{black} $p\in (0,2)$}, applying the simple estimate $\left( \sum_j a_j^2 \right)^{p/2} \leq \sum_j |a_j|^p$, we get 
	\begin{align*}
		\bE \left[ \sup_{t \in [0,T]} \|e(t)\|^p\right] \leq &C \bE \left[ {\sup_{0\leq k \leq n}\|e(t_k^{(n)})\|^p}\right] +C\sum\limits_{m=1}^{6} \bE \left[ {\sup_{t\in[0,T]}\|S_m(t)\|^p}\right]\\
		\leq &C \bE \left[ {\sup_{0\leq k \leq n}\|e(t_k^{(n)})\|^p}\right] +C\sum\limits_{m=1}^{6} \left( \bE \left[ {\sup_{t\in[0,T]}\|S_m(t)\|^2}\right]\right)^{p/2}.
	\end{align*} 

	This estimate together with Lemmas \ref{Lem3} -- \ref{Lem8} concludes \eqref{kq3}.

\section{Example} \label{sec4} 
In this section, we present a numerical example to justify our asymptotic  convergence analysis of the semi-implicit Milstein (SIM) scheme. We also compare this scheme with the semi-implicit  Euler-Maruyama (SIEM) scheme in \cite{NT2017}.  We consider a system of Brownian particles with nearest neighbor repulsion   $X=(X_1,\ldots,X_d)$  given by the following SDEs
\begin{equation} \label{vdX}
\begin{cases} \rd X_1(t) = \left\{ \frac{\gamma}{X_1(t)-X_2(t)} + b_1(X_1(t))\right\} \rd t +  \sigma_1 (X_1(t))\rd W_1(t),\\
\rd X_i(t) = \left\{ \frac{\gamma}{X_i(t)-X_{i-1}(t)} +\frac{\gamma}{X_i(t)-X_{i+1}(t)} + b_i(X_i(t))\right\}\rd t + \sigma_{i}(X_i(t))\rd W_i(t),\\
\hspace{9cm} i=2,\ldots, d-1,\\
\rd X_d(t) = \left\{ \frac{\gamma}{X_d(t)-X_{d-1}(t)} + b_d(X_{d}(t))\right\} \rd t +   \sigma_{d}(X_d(t))\rd W_d(t),
\end{cases}
\end{equation} 
with $X(0) \in \Delta_d$.  In particular, we choose $d = 10$, $\gamma = 1$, and  $b_i(x)=\sin x$, $\sigma_i(x)= \frac { \sin(2x)}{2}$ for $i=1,\ldots, 10$. 
Then it follows from Corollary 6.2 in \cite{GrMa14} that equation \eqref{vdX} has a unique strong solution in $\Delta_d$ for all $t>0$. 

Let's denote by $X^{\mathcal{E},n}$ and $X^{\mathcal{M},n}$ the SIEM and SIM approximate solutions of $X$, respectively. Let's also denote by  
$(X^{\mathcal{E},n,i})_{i \geq 1}$ and $(X^{\mathcal{M}, n,i})_{i\geq 1}$  independent and identicaly distributed copies of random variables $X^{\mathcal{E},n}$ and $X^{\mathcal{M},n}$, respectively. The iteration method in \cite{NT2017}, Propostion 4.1, is applied to estimate solution to systems of algebraic equations of the type \eqref{Xnt} for both schemes. 
 We use
$$mse^{\mathcal{E}}(k) =  \frac{1}{M}\sum_{i=1}^M \| X^{\mathcal{E},2^k,i}(1) -   X^{\mathcal{E},2^{k+1},i}(1)\|^2,$$
and 
$$mse^{\mathcal{M}}(k) =  \frac{1}{M}\sum_{i=1}^M \| X^{\mathcal{M},2^k,i}(1) -   X^{\mathcal{M},2^{k+1},i}(1)\|^2,$$
to estimate the convergence rate. It is justified by the fact that if a scheme $X^{\bullet,n}$ converges at the rate of order $\beta \in (0,+\infty)$ in $L^2$-norm then  there exists some constant $\beta >0$ such that 
$$2^{2\beta n} \bE \left[ \| X(1) - X^{\bullet,2^n}(1)\|^2 \right] = O(1),$$
then also 
$$ \ 2^{2\beta n} \bE\left[ \| X^{\bullet,2^{n+1}}(1) - X^{\bullet,2^n}(1)\|^2\right] = O(1),$$
and vice versa. In this case, we can write 
$$\log_2(mse^\bullet(k)) = -2\beta k + \tilde{\beta} + o(1),$$
for some $\tilde \beta \in \mathbb R$.  
Figure \ref{Fig1} shows the simulation result where we compute $mse^{\mathcal E}(k)$ and $mse^{\mathcal M}(k)$ for $k=1,\ldots, 5$, $M = 10^3$ and $X_0(i) = i/2$ (left), $X_0(i) = i$ (center), and $X_0(i) = 2i$ (right). We draw regression lines to estimate the rates of convergence $\beta$ for each scheme. 

\begin{figure} 
\begin{center} 
\includegraphics[width = 4cm]{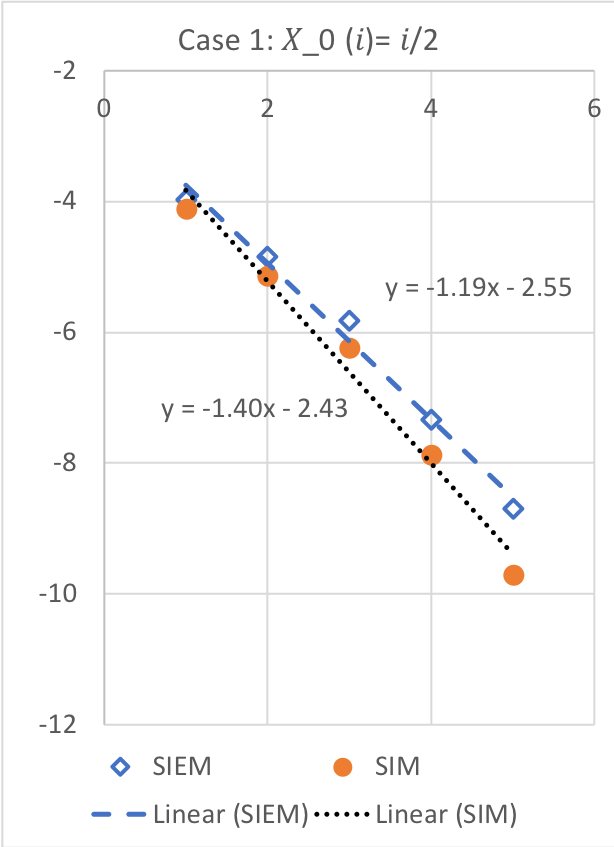}
\includegraphics[width = 4cm]{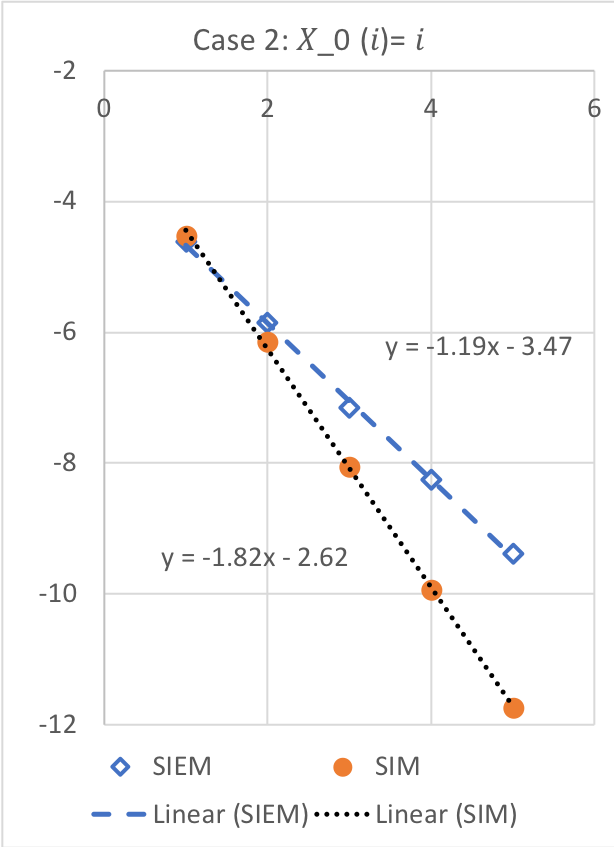}
\includegraphics[width = 4cm]{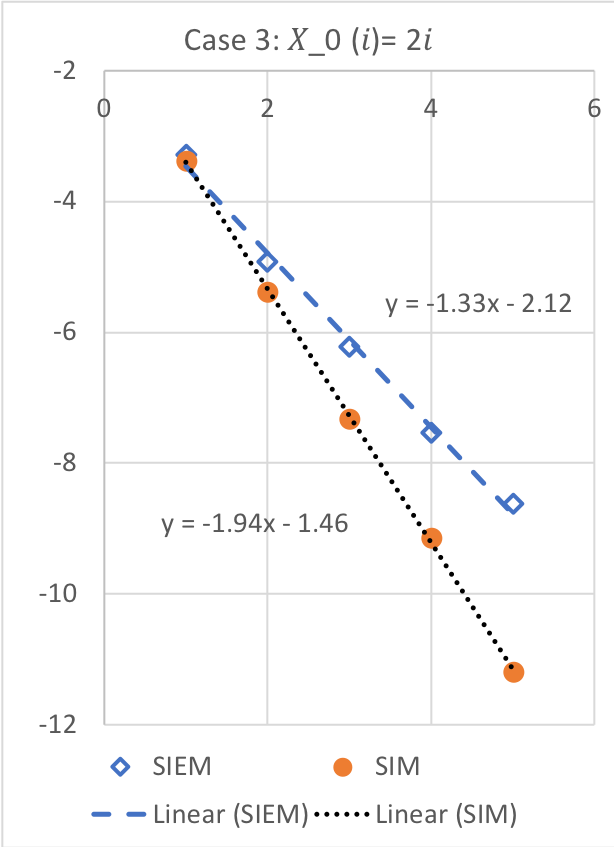}
\caption{Values of $mse^{\mathcal E}(k)$ (marked by $\diamond$) and  $mse^{\mathcal M}(k)$ (marked by $\bullet$) in $\log_2$-scale with $k=1,\ldots, 5$}
\label{Fig1} 
\end{center} 
\end{figure}
The empirical rates of convergence for each numerical scheme are shown in Table \ref{Table1}.   
We see that the rate of convergence of each scheme seems to depend on the distances between particles at the initial time. However, the rate of convergence of the SIM scheme is always higher than the one of SIEM scheme. In Cases 2 and 3, the rates of SIM scheme are close to 1, which supports our theoretical result. However, the rate of SIM scheme in Case 1 is low, which may due to the fact that the iteration method in Proposition 4.1 of \cite{NT2017} converges slowly in this case.  
\begin{table}
\begin{center} 
\begin{tabular}{c|ccc}
	 & Case 1: $X_i(0) =i/2$   & Case 2: $X_i(0) =i$ & Case 3: $X_i(0) =2i$ \\
	 \hline 
	SIEM scheme & 0.59 & 0.59 & 0.66\\
	SIM scheme & 0.70 & 0.91 & 0.97
\end{tabular}
\caption{Empirical rates of convergence of each numerical scheme}
\label{Table1}
\end{center} 
\end{table}

\end{document}